\newtheorem{theorem}{Theorem}[section]
\newtheorem{lemma}{Lemma}[section]
\newtheorem{remark}{Remark}[section]
\newtheorem{assump}{Assumption}[section]
\DeclarePairedDelimiter\action{\langle}{\rangle} 
\newcommand{\transpose}{\top} 
\newcommand{\tild}[1]{\tilde{#1}} 
\newcommand{\R}{\mathbb{R}} 
\newcommand{\N}{\mathbb{N}} 
\newcommand{\boundary}{\partial}
\newcommand{\linspa}{\mathcal{L}}
\newcommand{\embed}{\hookrightarrow}
\newcommand{\dt}{\partial_{t}}
\newcommand{\jac}{\nabla} 
\newcommand{\lapl}{\Delta}
\newcommand{\grad}{\nabla}
\newcommand{\divv}{\nabla \cdot}
\newcommand{\curl}{\nabla \times}
\newcommand{\half}{\frac{1}{2}}
\newcommand{\Hdir}{\mathbb{H}}
\newcommand{\dom}{{D}}
\newcommand{\youE}{\tild{u}}
\newcommand{\you}{u}
\newcommand{\youn}{u^0}
\newcommand{\uD}{\tild{h}}
\newcommand{\bcop}{\mathfrak{T}}
\newcommand{\T}{\mathcal{T}}
\newcommand{\vel}{v}
\newcommand{\pre}{p}
\newcommand{\domain}{\Omega}
\newcommand{\F}{\mathcal{F}}
\newcommand{\uss}{{ u_{*} }}
\newcommand{\nonlinpar}{\lambda}
\newcommand{\pref}{p_f}
\newcommand{\velf}{v_f}
\newcommand{\pvis}{\zeta}
\newcommand{\vvis}{\mu}
\newcommand{\predir}{p_D}
\newcommand{\veldir}{v_D}
\newcommand{\preneu}{p_N}
\newcommand{\velneu}{v_N}
\newcommand{\normalvec}{n}
\newcommand{\us}{{u_*}}
\newcommand{\A}{\mathfrak{A}}
\newcommand{\B}{\mathfrak{B}}
\newcommand{\Asym}{\mathcal{A}}
\newcommand{\Aant}{\mathscr{A}}
\newcommand{\ba}{B_{1}}
\newcommand{\bb}{B_{2}}
\newcommand{\bc}{B_{3}}
\newcommand{\bd}{B_{4}}
\newcommand{\ff}{f}
\newcommand{\g}{g}
\newcommand{\energy}{\mathcal{E}}
\newcommand{\G}{\mathcal{G}}
\newcommand{\Hneu}{\mathbb{G}}
\newcommand{\confun}{{C}}
\newcommand{\Margin}[1]{}
\newcommand{\revision}[1]{\textcolor{black}{#1}}
\journal{~}
\begin{document}

\begin{frontmatter}



\title{Well-posedness of a first-order-in-time model for nonlinear acoustics with nonhomogeneous boundary conditions}


\author{Pascal Lehner} 

\affiliation{organization={Department of Mathematics},
	addressline={University of Klagenfurt}, 
	city={Klagenfurt},
	postcode={9020}, 
	state={Carinthia},
	country={Austria}}

\begin{abstract}
We study well-posedness of a first-order-in-time model for nonlinear acoustics with nonhomogeneous boundary conditions in fractional Sobolev spaces. The analysis proceeds by first establishing well-posedness of an abstract parabolic-type semilinear evolution equation. These results are then applied to concrete operators and function spaces that capture the boundary conditions relevant for realistic modeling.

Our approach is based on the spectral decomposition of a positive definite self-adjoint operator, with solution regularity characterized via the domains of its fractional powers. Employing Galerkin’s method and the Newton-Kantorovich theorem, we prove well-posedness for the abstract nonlinear system with possibly nonhomogeneous boundary data.

The connection between (spectral) fractional powers of the Laplacian and fractional Sobolev spaces due to interpolation theory allows us to transfer these results to the nonlinear acoustic model under nonhomogeneous Dirichlet and Neumann boundary conditions, yielding fractional Sobolev regularity. For Hodge/Lions boundary conditions, we establish well-posedness with solutions in classical Sobolev spaces of integer order.
\end{abstract}



\begin{keyword}
	nonlinear acoustics \sep well-posedness \sep fractional Sobolev space 
	
	\MSC 35L60 \sep 35L50
	
\end{keyword}

\end{frontmatter}



	\section{Introduction}
In this work, we study well-posedness of the coupled nonlinear PDE
\[ \label{eq:full_system}
\begin{split}
	\dt \pre + \divv \vel - \pvis \lapl \pre + \nonlinpar \pre \divv \vel + \grad \pre \cdot \vel &= \pref \\
	\dt \vel + \grad \pre - \vvis \lapl \vel + \half \grad(\vel \cdot \vel - \pre^2) &= \velf \\
	\pre(\cdot, 0) = \pre_0, \quad \vel(\cdot,0) &= \vel_0 
\end{split}
\]
in a domain $\Omega \subset \R^d$ with $d \in \{2,3\}$ over a time interval $[0,T]$, with unknowns $p:\Omega \times [0,T] \to \R$ and $v:\Omega \times [0,T] \to \R^d$. The system is driven by source terms $p_f, v_f$ and supplemented with initial conditions $p_0,v_0$. The parameters $T, \ \nonlinpar, \ \pvis, \ \vvis >0 $ with $\nonlinpar \neq 1$ are constant. We consider equation \eqref{eq:full_system} under the following boundary conditions 
\begin{itemize} \label{bcs}
\item Dirichlet-Dirichlet 
\begin{align} \tag{2a} \label{bc-a}
\pre &= \predir \\
\vel &= \veldir \text{ on } \partial \Omega,
\end{align}
\item Neumann-Hodge/Lions ($d=3$) 
\begin{align} \tag{2b} \label{bc-b}
	\grad \pre \cdot \normalvec &= \preneu
	\\ \vel \cdot \normalvec &= \velneu \\
	( \curl \vel ) \times n &= v_\times  \text{ on } \partial \Omega,
\end{align}
\item Neumann-Dirichlet
\begin{align} \tag{2c}  \label{bc-c}
\grad \pre \cdot \normalvec &= \preneu \\ 
\vel &= v_D  \text{ on } \partial \Omega,
\end{align} 
\item Dirichlet-Hodge/Lions $(d=3)$
\begin{align} \tag{2d} \label{bc-d}
\pre &= \predir 
\\ \vel \cdot \normalvec &= \velneu
\\ ( \curl \vel ) \times n &= v_\times \text{ on } \partial \Omega,
\end{align}
\end{itemize}
where $p_D, v_D, p_N, v_N, v_\times$ are given boundary data and $n$ denotes the outer unit normal vector of $ \partial \Omega$.

In this paper, we use the notation $ x \cdot y := \sum_{i=1}^d x_i y_i$ for the Euclidean scalar product, where $x,y \in \R^d$ have components $x_i,y_i$. $\dt$ denotes the partial derivative with respect to time, acting component-wise when applied to vector valued functions. We define the spacial gradient operator as $\grad:=(\partial_{x_1}, \dots, \partial_{x_d})^\transpose$. When applied to a scalar field, $\grad$ represents the gradient vector, when applied to a vector field, it denotes the Jacobian matrix. $\grad \cdot$ denotes the divergence operator, while $\grad \times$ denotes the curl operator. The usual cross product is defined by $x \times y:= (x_2 y_3 - x_3 y_2 , x_3 y_1 - x_1 y_3 , x_1 y_2 - x_2 y_1)^\transpose$ for $x,y \in \R^3$. The Laplace operator is denoted by $\lapl:= \divv \grad$, and acts component-wise when applied to vector valued functions. 
\subsection{Motivation}
The motivation to study system \eqref{eq:full_system} under the various boundary conditions \eqref{bc-a} - \eqref{bc-d} stems from the results established in \cite{QuadraticWave}. The present work can be viewed as a continuation of the research direction initiated in \cite{QuadraticWave},  
where the authors study well-posedness of the coupled nonlinear PDE
\addtocounter{equation}{1}
\[ \label{eq:full_system_0}
\begin{split}
	\dt \pre + (1 + \gamma)\divv \vel - \pvis \lapl \pre + \epsilon_1 \pre \divv \vel + \epsilon_2 \grad \pre \cdot \vel &= \pref \\
	\dt \vel + (1 + \delta )\grad \pre - \vvis \lapl \vel - \frac{\epsilon_3}{2} \grad(\pre^2) + \frac{\epsilon_4}{2} \grad(\vel \cdot \vel) &= \velf \\
	\pre(\cdot, 0) = \pre_0, \quad \vel(\cdot,0) &= \vel_0 \\
	\pre = 0, \quad \vel &= 0 \text{ on } \partial \Omega 
\end{split}
\]
with $\epsilon_1, ..., \epsilon_4 > 0$, and $\gamma, \delta \in L^\infty(\Omega)$ sufficiently small given,
see \cite[Eq. (13)]{QuadraticWave}.
Equation \eqref{eq:full_system_0} can be interpreted as a reduced model derived from the Navier–Stokes–Fourier system. It serves as an approximation for nonlinear acoustic wave propagation in a viscous fluid, capturing the interaction between local fluctuations in pressure $p$ and the velocity field $v$, while not considering thermal and density variations. For a detailed derivation of \eqref{eq:full_system_0} from fundamental physical principles, including the conservation of mass, momentum, and energy, as well as thermodynamic laws, we refer the reader to \cite[Section 2]{QuadraticWave}.

To place equation \eqref{eq:full_system_0} in the context of wave equations and highlight its relevance to nonlinear acoustics, we note that it can be interpreted as a nonlinear, damped extension of the hyperbolic linear wave equation
\[ \label{eq:linsys}
\begin{split}
	\dt p + \divv v = p_f \\
	\dt v + \grad p = v_f .
\end{split}
\] 
Equation \eqref{eq:full_system_0} extends this linear system by incorporating nonlinear terms and viscous dissipation, thereby enabling the modeling of high-intensity acoustic phenomena in viscous fluids.
In fact, the damping parameters 
 $\zeta, \mu$ are typically small in the context of nonlinear acoustics, so that the wave like behavior induced by the operator pair 
 $(\divv, \grad)^\transpose$
dominates over the diffusive effects introduced by the Laplacian $- \lapl$.

Moreover, as observed in \cite[Remark 2.4]{QuadraticWave}, setting $\gamma = \delta = 0$ in equation \eqref{eq:full_system_0} yields a model that can be viewed as an alternative first-order-in-time formulation of Kuznetsov's equation. The latter is given by
\revision{
\[ 
\label{Kuz}
\begin{split}
\partial_{tt} p - c^2 \lapl p - b \dt \lapl p - \varepsilon \frac{B}{  2 A} \partial_{tt}  ( p^2 ) - \varepsilon \partial_{tt} (v \cdot v) &= 0
\\
\dt v + ´\grad p &= 0 \\
p(\cdot, 0) = p_0, \quad \dt p(\cdot, 0) = p_1, \quad v(\cdot, 0) &= v_0 \\
\pre &= 0 \text{ on } \partial \Omega 
\end{split} 
\]
}
\Margin{4}
and originally derived in \cite{kuznetsov:1971}.
Here $c$ denotes the wave speed, $b$ is a damping parameter, $\varepsilon$ a small parameter and $\frac{B}{ A}$ is the parameter of nonlinearity.
Kuznetsov's equation is widely used in nonlinear acoustics as a canonical model for high-intensity sound propagation. For a physical introduction to nonlinear acoustics, we refer to \cite{Hamilton:1997}, while a comprehensive mathematical overview can be found in the review article \cite{Kaltenbacher:2015}.

In contrast to the model \eqref{eq:full_system} considered in the present work, Kuznetsov’s equation involves second-order time derivatives, including within the nonlinear terms. \revision{Consequently, the regularity theory differs from that developed here, see, \cite{Kaltenbacher:2015, MeyerWilke2013}. In particular, \cite[Theorem 1]{MeyerWilke2013} establishes optimal $L^p$-regularity results for \eqref{Kuz}. In the Hilbert space setting $p=2$, assuming $p_0 \in H^2(\Omega)$, $p_1, v_0 \in H^1(\Omega)$, the authors obtain $p \in H^2(0,T;L^2(\Omega)) \cap H^1(0,T;H^2(\Omega)) $, $v \in BUC^1(0,T;H^1(\Omega))  $ and $\dt v \in H^{\frac{3}{2}}(0,T;L^2(\Omega)) \cap H^1(0,T;H^1(\Omega))$. Here, $BUC^1$ denotes the space of functions having a bounded and uniformly continuous derivative.
}
\Margin{4}

Interest in first-order formulations stems largely from their practical advantages in the design of efficient numerical methods, particularly in the context of adaptive space-time discretizations. In the case of the linear acoustics equation \eqref{eq:linsys}, effectiveness has already been demonstrated as a foundation for discontinuous Galerkin methods, \revision{ see \cite{DoerflerFindeisenWienersZiegler:2019, bansal:2021}}.
\Margin{1.}

To employ equation \eqref{eq:full_system_0} as a physically meaningful model for nonlinear wave propagation, it is essential to impose boundary conditions that reflect realistic scenarios. The goal of this paper is therefore to analyze the system \eqref{eq:full_system_0} under a variety of more general and physically relevant boundary conditions, rather than restricting the analysis to the commonly used homogeneous Dirichlet conditions. The specific choices of boundary conditions considered in this work ranging from \eqref{bc-a} to \eqref{bc-d} are explained below in section \ref{se:bcs}.

Before proceeding, we comment on the choice of the parameters used to derive \eqref{eq:full_system} from the more general system \eqref{eq:full_system_0}. First, we set $\epsilon_1=\lambda, \epsilon_2 = 1, \epsilon_3 = 1, \epsilon_3 = 1, \epsilon_4 = 1$, a choice justified by the transform of variables $\tild{p} = \varepsilon p$ and $\tild{v}= \varepsilon v$, cf. \cite[Eq. (12)]{QuadraticWave}. In \cite{QuadraticWave} the parameters $\epsilon_1, \dots, \epsilon_4$ are treated as placeholders, allowing for flexibility in the coefficients of the nonlinear terms. However, in this work, we choose to simplify the model by avoiding additional parameters and instead focus on the core structure of the equation relevant to nonlinear acoustics.
Second, we set $\gamma=\delta=0$, since, as noted earlier, Kuznetsov's equation can be recovered from equation \eqref{eq:full_system} under this assumption. Moreover, the terms involving 
$\gamma,\delta$ are related to entropy contributions, which are typically neglected in models of nonlinear acoustics. While the analysis presented here could, in principle, be extended to the case 
$\gamma,\delta \neq 0$ following similar arguments as in \cite{QuadraticWave}, we omit this extension to streamline the exposition and reduce notation.


\subsection{Contributions}
First, this paper extends the results of \cite{QuadraticWave} by proving existence of $H^{1+s}(\Omega)$ regular solutions in space for $s \in [\half, 1]$ in the case of nonhomogeneous Dirichlet boundary conditions, see Theorem \ref{th:wellposeddir}.
Compared to \cite{QuadraticWave}, the approach presented here accommodates slightly less regular initial data $(p_0,v_0)$ and right-hand sides $(\pref, \velf)$. An immediate consequence is that, for the limiting case $s=\half$ solutions that are unbounded are not necessarily excluded from a theoretic standpoint, since $H^s(\Omega) \embed L^\infty(\Omega)$ only for $s>\frac{3}{2}$ if $d=3$. To the author’s knowledge, fractional regularity results of this type have not been investigated for other equations in nonlinear acoustics.

Beyond the Dirichlet setting, this work also establishes well-posedness of equation~\eqref{eq:full_system} under nonhomogeneous Neumann and Hodge/Lions boundary conditions, see Theorems \ref{th:wellposedneu}, \ref{th:wellposednoslip}, and \ref{th:dir-hod}. The analysis of Hodge/Lions boundary conditions requires more delicate treatment, and in this case, we restrict to spatial dimension 
$d=3$ and prove well-posedness for solutions with 
$H^2(\Omega)$ regularity in space. To the best knowledge of the author, this is the first work to address Hodge/Lions boundary conditions in the context of nonlinear acoustics. \\

To obtain these results, we consider the more abstract formulations \eqref{eq:full_system_abstract} and \eqref{eq:full_system_abstract_nonlinear}, which are semilinear parabolic evolution equations that have been extensively studied in the literature, see, for example, \cite{Ladyzhenskaya1968, lunardi1995}.

For instance, well-posedness of \eqref{eq:full_system} can be established using standard semigroup techniques, as presented in \cite{lunardi1995, pazy1983}.	
By applying classical semigroup theory, well-posedness of equation \eqref{eq:full_system_abstract_nonlinear} follows under the condition $g \in D(\Asym^\gamma)$ with $\gamma>\frac{3}{4}$, see \cite[Section 8.3, Theorem 3.5]{pazy1983}. While there also exist results for less regular initial data, these typically require stronger assumptions on the right-hand side $f$, cf. \cite[Theorem 7.1.6]{lunardi1995}, while in this work the right hand side can be any function in $L^2$, thereby including discontinuities in both space and time.

Alternatively, well-posedness of equation \eqref{eq:full_system} can be shown via energy methods, as in \cite{Ladyzhenskaya1968}. However, the analysis in \cite{Ladyzhenskaya1968} only considers fractional Hölder regularity and does not address fractional Sobolev regularity. While fractional Sobolev regularity results are provided in \cite{Ribaud1998}, those results are limited to the whole space $\R^d$.
To the best knowledge of the author, a higher-order fractional Sobolev energy estimate on a bounded domain, combined with the specific type of nonlinearity considered here, has not yet been addressed in the literature.


\subsection{Choice of boundary conditions} \label{se:bcs}
In the following, we briefly motivate our choice of Hodge/Lions boundary conditions for $v$ instead of classical Neumann conditions on each component of 
$v$, as one might naturally impose for parabolic-type equations.

By Hodge/Lions boundary conditions we refer to the two boundary conditions involving the velocity $
v$. In the Navier–Stokes literature, these are also known as Hodge, Lions, or Navier-slip boundary conditions, see, e.g., \cite{li2022b, monniaux2013, phan2017, xiao2007}.
Physically, such conditions are often used to model friction along the boundary of the domain. The specific Hodge/Lions formulation considered here is a special case of slip boundary conditions. This choice is particularly natural for equation~\eqref{eq:full_system}, which is derived from the Navier–Stokes–Fourier system for viscous, heat-conducting fluids, cf. \cite[Section~2]{QuadraticWave}.

If slip effects are not to be modeled, one may instead impose Dirichlet conditions on each component of $v$, leading to the classical no-slip condition $v=0$ on $\partial \Omega$. See \cite{li2022b} for an overview of different boundary conditions used for the Navier–Stokes equations.
The present work therefore treats both slip and no-slip boundary conditions, while also allowing a choice between Dirichlet and Neumann conditions for 
$p$.

On the other hand, one could also consider classical Neumann boundary conditions on each component of $v$. However, the physical meaning of such conditions is questionable. Moreover, due to the fact that skew symmetry of $(\divv, \grad)^\transpose$ is lost, classical Neumann boundary conditions on $v$ do not fit into the framework that is used here. 
Therefore, the  Hodge/Lions boundary conditions considered here are more natural for equation \eqref{eq:full_system} as they provide differential operators and energy estimates in line with the Dirichet case. \\


The rest of the paper is organized as follows. In order to treat the various boundary conditions in a unified way, we first consider a more abstract PDE in section \ref{se:abstractsystem} and prove its well-posedness in Theorems \ref{th:nonlinear} and \ref{th:wellposednonhom}. Then, we apply the results to equation \eqref{eq:full_system} under the various boundary conditions \eqref{bc-a} to \eqref{bc-d} in section \ref{se:application}.
\section{Abstract system} \label{se:abstractsystem}
We start by showing well-posedness of the auxiliary linear Cauchy problem  
\[ \label{eq:full_system_abstract}
\begin{split} 
	\dt u + \A u + \B[u ,\us ] + \B[\us ,u ] &=  \ff \\
	u(0) &= \g
\end{split}
\] 
with $\A$ linear, $\B$ bilinear, and given data $\us, \ff, g$ in section \ref{se:GalerkinApprox}. Then, in sections \ref{se:nonlinearproblem} and \ref{nonhombcs} using well-posedness of \eqref{eq:full_system_abstract}, we provide well-posedness of the nonlinear problem
\[ \label{eq:full_system_abstract_nonlinear}
\begin{split} 
	\dt u + \A u + \B[u , u ] &=  \ff \\
	u(0) &= \g 
\end{split}
\] 
with possibly nonhomogeneous boundary conditions.

In the following, we denote the ball of radius $r$ centered at $y \in X$ by $B_{r}^{X}(y) := \{ x \in X : \| x -y \| \leq r \}$ for a normed space $X$. With $\linspa(X,Y)$ we denote the space of linear and bounded functions from Banach spaces $X$ to $Y$ and denote a continuous embedding of such spaces as $X \embed Y$. With $X^*$ we denote the dual space of $X$ and $L^2(0,T;X)$ or $H^1(0,T;Y)$ denote the usual Bochner spaces. Continous functions from $[0,T]$ to $X$ are denoted as $C([0,T];X)$. Furthermore, we refer to
\[
a b \leq \frac{\delta a^2 }{2} + \frac{b^2}{2 \delta} \quad a,b,\delta>0
\]
as Young's inequality. 

In the analysis, we rely on the following assumptions, which are supplemented by Remark \ref{re:fractional}, see below.
\begin{assump} \label{se:assumption}
	~
\begin{enumerate}[label=(\roman*),ref=\theassump(\roman*)]
	\item\label{se:assa} $H$ is a real infinite dimensional separable Hilbert space with scalar product $\action{\cdot, \cdot}$. 
	\item\label{se:assb} The linear densely defined operator $\A:\dom(\A) \subset H \to H$ is the sum of a linear self adjoint operator $\Asym$ and a linear skew symmetric operator $\Aant$. 
	
	More precisely, $\Asym:\dom(\A) \subset H \to H$ is self adjoint and has a spectral decomposition with the following properties. $\Asym$ has pure point spectrum, and there exists a sequence of real eigenvalues $\{ \lambda_k \}_{k=1}^\infty$ such that
	$0 < \lambda_1 \leq ... \leq \lambda_k  \leq ... $ and $\lambda_k \to \infty$ for $k \to \infty$ holds. Furthermore, the set of eigenvectors $\{ \phi_k \}_{k=1}^\infty$
	forms an orthonormal Hilbert basis of $H$. 
	
	For $\Aant:\dom(\A) \subset H \to H$, we assume skew symmetry
	which in particular leads to
	\[ 
	\label{as:anti}\action{\Aant u, u} = 0 \quad \text{ for }  u \in \dom(\A) .
	\] 
	Moreover, $\Aant: \dom(\Asym^\half) \to H$ is required to be bounded with operator norm $C_{\Aant}>0$.
	\item\label{se:assc} $ \B : \dom(\A) \times \dom(\A) \to H $ is bilinear and satisfies
	\[ \label{as:bilinestimate1} \vert \action{ \B[u,w], v } \vert \leq  C_\B \|  \Asym^\frac{\sigma + 1}{2} u \|_H \| \Asym^{\frac{\sigma }{2}} w \|_H \| \Asym^{\frac{1 - \sigma}{2}} v \|_H 
	\]  
	for some $\sigma \in [0,1]$ and any $u \in \dom(\Asym^\frac{\sigma+1}{2}) \ , w \in \dom(\Asym^\frac{\sigma}{2}), \ v \in \dom(\Asym^\frac{1-\sigma}{2})$ with $C_\B>0$.
\end{enumerate}
\end{assump}
\begin{remark} \label{re:fractional}
	Let Assumption \ref{se:assumption} hold. Due to the positivity of the eigenvalues of $\Asym$, we define fractional powers $s \in \R$ of $\Asym$ via the spectral decomposition as
	\[
	\Asym^s u : \dom(\Asym^s) \subset H \to H  , \quad \Asym^s u := \sum_{k=1}^\infty \lambda_k^s \action{u, \phi_k} \phi_k
	\]
	and the operators $\{ \Asym^s \}_{s \geq 0}$ are themselves again self adjoint 
	with domain 
	\[  \dom(\Asym^s) =  \left\{ u 
	\in H   : \sum_{k=1}^\infty  \lambda_k^{ 2s} \action{u, \phi_k}^2  < \infty \right\}, \]
	see \cite[Section 5.8]{zeidler1995} for details. 
	
	Defining $\dom(\Asym^{-s}) := \dom(\Asym^s)^*$ the domains are 
	Hilbert spaces for every $s \in \R$ with norm 
	$$\| u \|_{\dom(\Asym^s)} := \left(\sum_{k=1}^\infty \lambda_k^{2s} \action{u, \phi_k}^2 \right)^\half = \| \Asym^s u  \|_H 
	$$ and therefore $\Asym^s: \dom(\Asym^t) \to \dom(\Asym^{t-s})$ continuously for any $t \in \R$ cf. \cite[Section 2.1]{fefferman2022}.
	
	Since $\lambda_k \to \infty$, there exists $N \in \N$ such that $\lambda_n \geq 1$ for all $n \geq N$. From this we can conclude that for $s \geq t \geq 0 $
	\[ \label{as:frac_embed} \dom(\Asym^s) \embed \dom(\Asym^t)\]
	with embedding constant denoted by $c_\Asym(s,t) > 0$.
	
	Moreover, there is a characterization of the fractional domains as interpolation spaces, namely 
	\[ \label{eq:interpolation}
	\dom(\Asym^\theta) = (H, \dom(\Asym) )_\theta
	\] with $0 < \theta < 1 $ denoting the parameter of real interpolation on Hilbert spaces, see \cite[Theorem 4.36]{Lunardi2018}.
	
	Additionally, we have that the embedding \[ \label{as:Linfemb} 
	L^2(0,T;\dom(\Asym^\frac{\sigma+1}{2})) \cap H^1(0,T;\dom(\Asym^{\frac{1 - \sigma}{2}})^*) \embed C([0,T];\dom(\Asym^\frac{\sigma}{2})) 
	\]
	is continuous for any $T>0$ 
	with embedding constant $C_T>0$ depending on $T$. This follows, since $\dom(\Asym^{\frac{\sigma}{2}})$ is a Hilbert space and from \eqref{as:frac_embed} we have $\dom(\Asym^{\frac{\sigma}{2}}) \embed \dom(\Asym^{\frac{\sigma + 1}{2}})$, yielding the Gelfand tripel $  \dom(\Asym^{\frac{\sigma}{2}}) \subset \dom(\Asym^{\frac{\sigma + 1}{2}}) \subset \dom(\Asym^{\frac{\sigma + 1}{2}})^*$. Applying \cite[Lemma 7.3, $p=p'=2$]{Roubicek:2013} yields
	$  	L^2(0,T;\dom(\Asym^\frac{\sigma+1}{2})) \cap H^1(0,T;\dom(\Asym^{\frac{1 + \sigma}{2}})^*) \embed C([0,T];\dom(\Asym^\frac{\sigma}{2}))  $. The embedding \eqref{as:Linfemb} then follows since, $ \dom(\Asym^{\frac{\sigma - 1}{2}})^* = \dom(\Asym^{\frac{- \sigma + 1}{2}}) \embed \dom(\Asym^{\frac{- \sigma - 1}{2}})  =\dom(\Asym^{\frac{\sigma + 1}{2}})^*$.
\end{remark}
\subsection{Auxiliary linear problem} \label{se:GalerkinApprox} 
The goal of this section is to prove the following theorem under the assumptions introduced above, which provides well-posedness of the linear system \eqref{eq:full_system_abstract}.
Note that the assumed structure of $\A$ is crucial in the following results that are based on energy estimates. While the self-adjointness allows us to obtain a positive energy, the skew-adjoint terms cancel out and are not part of the energy.

\begin{theorem} \label{th:linearwellposed}
	Let $T>0$ and Assumption \ref{se:assumption} hold with $\sigma \in [0,1]$.
	Then, for small enough $r_G>0$ and all $\us \in B_{r_G}^{X}(0)$ as well as any
		$$f \in Y := L^2(0,T; D(\Asym^{\frac{1-\sigma}{2}} )^*), \quad g \in \dom(\Asym^\frac{\sigma}{2}) $$
	there exists a unique solution 
	$$u \in	X := L^2(0,T;D(\Asym^{\frac{1+\sigma}{2}}) )  \cap H^1(0,T; D(\Asym^{\frac{1-\sigma}{2}} )^*)  $$ 
	of equation \eqref{eq:full_system_abstract}. Moreover, the a priori estimate
	\[ \label{eq:GalerkinEstimate_Dir}
	\| u \|_X \leq C_G (  \| \ff \|_{Y} + \| g \|_{\dom(\Asym^\frac{\sigma}{2}) } )
	\] holds for some $C_G>0$.
\end{theorem}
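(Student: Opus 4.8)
The plan is to construct solutions by a Galerkin scheme built on the orthonormal eigenbasis $\{\phi_k\}_{k=1}^\infty$ of $\Asym$ provided by Assumption \ref{se:assumption}. Writing $H_n := \operatorname{span}\{\phi_1,\dots,\phi_n\}$ and seeking $u_n(t) = \sum_{k=1}^n c_k^n(t)\phi_k$, I would project \eqref{eq:full_system_abstract} onto $H_n$, requiring $\action{\dt u_n,\phi_k} + \action{\A u_n,\phi_k} + \action{\B[u_n,\us]+\B[\us,u_n],\phi_k} = \action{\ff,\phi_k}$ for $k \le n$, with $u_n(0) = \sum_{k\le n}\action{\g,\phi_k}\phi_k$. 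Since $\us$ is fixed, this is a \emph{linear} first-order ODE system for the coefficient vector $c^n$ whose time-dependent matrix and inhomogeneity are, by \eqref{as:bilinestimate1} and $\ff \in Y$, integrable in $t$; Carath\'eodory's theorem then yields a unique absolutely continuous solution on all of $[0,T]$.

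The core of the argument is a uniform a priori estimate in the norm of $X$. Since $\Asym^\sigma$ maps $H_n$ into itself, I may test the Galerkin equation with $\Asym^\sigma u_n$. The time term produces $\half\frac{d}{dt}\|\Asym^{\sigma/2}u_n\|_H^2$ and the self-adjoint part produces the dissipation $\|\Asym^{(1+\sigma)/2}u_n\|_H^2$; the right-hand side is controlled by duality, $|\action{\ff,\Asym^\sigma u_n}| \le \|\ff\|_{\dom(\Asym^{(1-\sigma)/2})^*}\|\Asym^{(1+\sigma)/2}u_n\|_H$, and absorbed into the dissipation via Young's inequality. For the bilinear terms I apply \eqref{as:bilinestimate1} with test function $\Asym^\sigma u_n$, using $\|\Asym^{(1-\sigma)/2}\Asym^\sigma u_n\|_H = \|\Asym^{(1+\sigma)/2}u_n\|_H$. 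This bounds $|\action{\B[u_n,\us],\Asym^\sigma u_n}|$ by $C_\B\|\Asym^{\sigma/2}\us\|_H\|\Asym^{(1+\sigma)/2}u_n\|_H^2$, where the embedding \eqref{as:Linfemb} gives $\sup_t\|\Asym^{\sigma/2}\us(t)\|_H \le C_T r_G$, so for $r_G$ small this term too is absorbed into the dissipation. The companion term is estimated by $C_\B\|\Asym^{(1+\sigma)/2}\us\|_H\|\Asym^{\sigma/2}u_n\|_H\|\Asym^{(1+\sigma)/2}u_n\|_H$, whose factor $\|\Asym^{(1+\sigma)/2}\us\|_H$ lies only in $L^2(0,T)$; after a Young split it yields a Gr\"onwall coefficient in $L^1(0,T)$ multiplying $\|\Asym^{\sigma/2}u_n\|_H^2$.

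The step I expect to be most delicate is the skew-symmetric contribution $\action{\Aant u_n,\Asym^\sigma u_n}$, since the cancellation \eqref{as:anti} applies only to the unweighted pairing and fails for $\sigma \ne 0$. Using only the assumed boundedness $\|\Aant u_n\|_H \le C_{\Aant}\|\Asym^{1/2}u_n\|_H$ together with $\dom(\Asym^{(\sigma-1)/2}) \supseteq H$, I would rewrite the pairing as $\action{\Asym^{(\sigma-1)/2}\Aant u_n,\Asym^{(\sigma+1)/2}u_n}$ to obtain $|\action{\Aant u_n,\Asym^\sigma u_n}| \le C\|\Asym^{1/2}u_n\|_H\|\Asym^{(1+\sigma)/2}u_n\|_H$, then interpolate $\|\Asym^{1/2}u_n\|_H \le \|\Asym^{\sigma/2}u_n\|_H^{\sigma}\|\Asym^{(1+\sigma)/2}u_n\|_H^{1-\sigma}$ (immediate from the spectral representation and H\"older), and finish with Young's inequality so that a fraction of $\|\Asym^{(1+\sigma)/2}u_n\|_H^2$ is absorbed and only a multiple of $\|\Asym^{\sigma/2}u_n\|_H^2$ remains. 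Integrating in time and applying Gr\"onwall's inequality then bounds $\sup_t\|\Asym^{\sigma/2}u_n(t)\|_H^2 + \int_0^T\|\Asym^{(1+\sigma)/2}u_n\|_H^2\,dt$ by $C(\|\ff\|_Y^2 + \|\g\|_{\dom(\Asym^{\sigma/2})}^2)$. Reading $\dt u_n$ off the equation and estimating each term in the $\dom(\Asym^{(1-\sigma)/2})^*$-norm (again via \eqref{as:bilinestimate1} and the boundedness of $\Aant$) controls $\|\dt u_n\|_{L^2(0,T;\dom(\Asym^{(1-\sigma)/2})^*)}$, completing $\|u_n\|_X \le C(\|\ff\|_Y + \|\g\|_{\dom(\Asym^{\sigma/2})})$.

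Finally, I would extract a subsequence converging weakly in $X$ to some limit $u$; because $\us$ is fixed, the maps $w \mapsto \B[w,\us]$ and $w \mapsto \B[\us,w]$ are bounded and linear, so passing to the limit in the Galerkin identities is routine and $u$ solves \eqref{eq:full_system_abstract}. The initial condition is recovered from the embedding \eqref{as:Linfemb} into $C([0,T];\dom(\Asym^{\sigma/2}))$, estimate \eqref{eq:GalerkinEstimate_Dir} follows by weak lower semicontinuity of the norm, and uniqueness follows by applying the same energy estimate to the difference of two solutions, which solves the linear homogeneous problem and is therefore forced to vanish by Gr\"onwall's inequality.
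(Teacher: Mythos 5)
Your proposal is correct, and it shares the paper's overall skeleton (Galerkin approximation on the eigenbasis, energy estimate, weak compactness, integration-by-parts identification of the limit), but the core energy estimate is executed by a genuinely different device. The paper never tests with $\Asym^\sigma u_m$ alone: it uses the augmented test function $\phi = c_\Asym^2 C_\Aant^2\, u_m + \Asym^\sigma u_m$. The $u_m$-component produces an extra dissipation $C_A\|\Asym^{1/2}u_m\|_H^2$ in which the skew contribution cancels exactly by \eqref{as:anti}, and this extra dissipation absorbs the low-order half of the crude Young split $|\action{\Aant u_m,\Asym^\sigma u_m}| \le C_\Aant\|\Asym^{1/2}u_m\|_H\, c_\Asym\|\Asym^{(1+\sigma)/2}u_m\|_H$; the remaining bilinear terms are then absorbed purely by the smallness $\|\us\|_X\le r_G$ together with \eqref{as:Linfemb}, so the paper's proof is entirely Gr\"onwall-free. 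You instead test with $\Asym^\sigma u_n$ only, dispose of the skew term via the interpolation $\|\Asym^{1/2}u\|_H \le \|\Asym^{\sigma/2}u\|_H^{\sigma}\|\Asym^{(1+\sigma)/2}u\|_H^{1-\sigma}$ (valid by H\"older on the spectral sums, since $\tfrac12=\sigma\cdot\tfrac{\sigma}{2}+(1-\sigma)\tfrac{1+\sigma}{2}$) followed by Young with exponents $2/\sigma$ and $2/(2-\sigma)$, and treat $\B[\us,u_n]$ by a Young split whose leftover carries an $L^1(0,T)$ coefficient, closing with Gr\"onwall. Both routes are sound. Yours degenerates at $\sigma=0$, where the Young exponents blow up, but there \eqref{as:anti} kills the term outright, as you note; it also reveals that smallness of $\us$ is genuinely needed only for $\B[u_n,\us]$ (whose coefficient multiplies the full dissipation), whereas the paper's absorption uses smallness for both bilinear terms. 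What each buys: the paper's trick gives explicit constants with no exponential factor and a cleaner absorption structure; your argument is more modular and closer to the standard parabolic template, at the cost of a constant of the form $\exp\bigl(C(1+\|\us\|_X^2)\bigr)$, which is harmless here. Two points you compress, both routine: identifying $u(0)=g$ requires the integration-by-parts-in-time comparison against test functions vanishing at $T$ (weak convergence in $X$ plus the embedding \eqref{as:Linfemb} alone does not transfer the initial value), and for uniqueness one should note that the difference of two solutions lies in $X$, so the energy identity may be applied to it directly.
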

\begin{proof}
	The proof is split into several steps. To shorten notation we define $U:= D(\Asym^{\frac{1+\sigma}{2}})$, $V:=   D(\Asym^{\frac{1-\sigma}{2}} )$ and $W:=  \dom(\Asym^\frac{\sigma}{2}) $.
	
	First, we show that $\A:X \to Y$ is bounded. 
	Due to assumption \ref{se:assb} and embedding \eqref{as:frac_embed}, we know that $\Asym: U \to V^*$ is bounded with some constant $C_{\Asym}>0$, since $U$ and $V^*$ can be expressed in terms of fractional domains of $\Asym$. Furthermore, by boundedness of $\Aant:\dom(\Asym^\half) \to H$ and \eqref{as:frac_embed} from Remark \ref{re:fractional} 
	$$ \vert \action{ \Aant u, v } \vert \leq \| \Aant u \|_H \| v \|_H \leq C_{\Aant} \| u \|_{\dom(\Asym^\half)} \| v \|_{H} \leq C_{\Aant} c_{\Asym}(\tfrac{1+\sigma}{2},\tfrac{1}{2}) c_{\Asym}(\tfrac{1-\sigma}{2},0) \| u \|_{U} \| v \|_{V}
	$$
	yielding with $c_\Asym^2:=  c_{\Asym}(\frac{1+\sigma}{2},\half) c_{\Asym}(\frac{1-\sigma}{2},0)$ 
	$$\int_0^T \| \A u \|_{V^*}^2 ds \leq \int_0^T ( \| \Asym u\|_{V^*} + \| \Aant u\|_{V^*} )^2 ds \leq ( C_{\Asym} + C_{\Aant}c_{\Asym}^2)^2\int_0^T  \| u \|_U^2 ds,
	$$
	which when taking square roots on both sides, gives boundedness of $\A:X \to Y$ with constant $C_\A \leq (C_{\Asym} + C_{\Aant} c_{\Asym}^2)$. 
	
Here, $ds$ denotes integration with respect to the time variable. 
Throughout this paper, the operator $\A$ is understood to act pointwise in time. That is, for a time dependent function $u$, we interpret $\A u$ as the mapping $t \mapsto \A u(t)$. 
To simplify notation, we write $c_\Asym$ for any constant of the form $c_\Asym(x,y)$ without explicitly indicating the dependence on $x, y \in \R$.

	Second, we have boundedness of $\B : X \times X \to Y$ with constant less or equal then $C_\B C_T$. This is due to assumption \ref{se:assc} implying \eqref{as:bilinestimate1} and \eqref{as:Linfemb}, since then
	\[
	\int_0^T \| \B[u,v] \|^2_{V^*} ds \leq \int_0^T C_\B^2 \| u \|_U^2 \| v \|^2_W ds \leq \max_{s \in [0,T ]} \| v \|_W^2 C_\B^2 \int_0^T \| u \|_U^2 ds \leq C_T^2 C_\B^2 \| u \|_X^2 \| v \|_X^2.
	\]
	Again, $\B$ is understood as acting pointwise in time, that is, by $\B[u,v]$ we mean $t \mapsto \B[u(t),v(t)]$ if $u,v$ are $t$-dependent. 
	\subsubsection{Galerkin Approximation}
	To show existence of a solution, we use Galerkin's method. That is, we \revision{approximate solutions to \eqref{eq:full_system_abstract} by 
	\( u_m(t) = \sum_{k=1}^m \xi_k^m(t) \phi_k \in U_m=\text{span}\{ \phi_k : k=1, \dots, m\} \subset U \) satisfying 
	\[ \label{eq:galerkin_equation}
	\action{	\dt u_m + \A u_m + \B[u_m ,\us ] + \B[\us ,u_m ] , \phi} = \action{ \ff, \phi} , \quad u_m(0) = g_m
	\]
	for all $\phi \in U_m \subset V$. Here $g_m$ is a projection of $g$ onto $U_m$.
	} 
	\Margin{2}
	The construction of the sequence $\{u_m \}_{m \in \N}$ is well posed in the usual sense with absolutely continuous coefficients $\xi_k^m$. 
	We omit the details here and refer to e.g. \cite[Chapter 7]{Evans:2010} and \cite[Section 2.3.1]{QuadraticWave}. 
	\subsubsection{Energy estimates}
	For the energy estimates, we define the time dependent energy functional 
	$$\energy[{u_m}](t) :=  \max_{s \in [0,t]} \| u_m(s) \|^2_H + \int_0^t \| \Asym^\half u_m(s)\|^2_H ds  
	$$
	and, as a first step, prove its boundedness uniformly in $m \in \N$. The initial energy $\energy[\cdot](t=0)$ is denoted by $\energy_0[\cdot]$. 
	
	Testing equation \eqref{eq:galerkin_equation} with $\phi(s) := c_{\Asym}^2 C_{\Aant}^{2} u_m(s) + \Asym^\sigma u_m(s) \revision{ \, \, \in U_m}$\Margin{2}, where $\sigma, c_{\Asym}, C_{\Aant}$ as in Assumption \ref{se:assumption}, gives
	\[ \label{eq:energyestimate0}
	\action{	\dt u_m(s) + \A u_m(s) + \B[u_m(s) ,\us(s) ] + \B[\us(s) ,u_m(s) ] , \phi(s)} = \action{ \ff(s), \phi(s)}.
	\]
	Using assumption \ref{se:assb} that $\Asym$ and its fractional powers are self adjoint and $\Aant$ is skew symmetric \eqref{as:anti}, while keeping in mind that $u_m(s) \in \dom(\Asym)$ is absolutely continuous with respect to time, equation \eqref{eq:energyestimate0} implies
	\[ \label{eq:energestimate1}
	\begin{split}
		c_{\Asym}^2 C_{\Aant}^{2} &\left( \frac{\dt}{2} \| u_m \|_H^2 + \| \Asym^{\frac{1+0}{2}} u_m \|_H^2\right) + \left( \frac{\dt}{2}  \| \Asym^{\frac{0+\sigma}{2}}u_m \|_H^2 +  \| \Asym^\frac{1+\sigma}{2} u_m \|_H^2 \right) \\
		&\qquad \qquad \leq  \vert \action{\Aant u_m, \Asym^\sigma u_m} + \action{ \B[u_m ,\us ] + \B[\us ,u_m ] + \ff, \phi} \vert,
	\end{split}
	\]
	where we omit the time argument $s$ of $u_m(s)$ in the following for reasons of readability.
	Using boundedness of $\Aant$, Young's inequality 
		and \eqref{as:frac_embed} with $\sigma \leq \frac{\sigma+1}{2}$ we get 
		\[  \vert \action{\Aant u_m, \Asym^\sigma u_m} \vert
		\leq  \| \Aant u_m \|_H \|  \Asym^\sigma u_m \|_H \leq
		\frac{c_{\Asym}^2 C^{2}_{\Aant}}{2} \| \Asym^\half u_m \|_H^2 +  \frac{1}{2} \| \Asym^{\frac{1+\sigma}{2}}u_m\|^2_H , \]
		so that after multiplying $\eqref{eq:energestimate1}$ by $2$ we are left with
		\[ \label{eq:energyestimate2}
		\begin{split}
			C_A ( \dt \| u_m \|_H^2 +  \| \Asym^\half u_m \|_H^2) &+  \dt  \| \Asym^{\frac{\sigma}{2}} u_m \|_H^2 + \| \Asym^\frac{1+\sigma}{2} u_m \|_H^2 
			\\
			&\leq 2 \vert \action{ \B[u_m ,\us ] + \B[\us ,u_m ] + \ff, \phi} \vert
		\end{split}
		\]
		with $C_A=c_{\Asym}^2 C_{\Aant}^2$.
		Upon integrating from $0$ to $t$ with respect to $ds$, and then taking the maximum of $s$ from $0$ to $t$ in \eqref{eq:energyestimate2}, we obtain 
		\[ \label{eq:energyestimate3}
		\begin{split}
			C_A ( \energy[u_m](t) - \energy_0[u_m] )   + &  \energy[\Asym^\frac{\sigma}{2} u_m](t)-  \energy_0[\Asym^\frac{\sigma}{2} u_m]
			\\
			&\leq 2 \int_0^t \vert \action{ \B[u_m ,\us ] + \B[\us ,u_m ] + \ff, \phi} \vert ds. 
		\end{split}
		\]
		Using \eqref{as:frac_embed} with $\frac{1-\sigma}{2} \leq \frac{1+ \sigma}{2}$ and Young's inequality twice we get
		\[  
		\begin{split}
			2 \int_0^t \vert \action{ \ff, \phi} \vert ds &\leq 2 \int_0^t \| \ff \|_{V^*} \| \Asym^{\frac{1-\sigma}{2}} \phi \|_H ds
			\\
			&\leq 2 \int_0^t\| \ff \|_{V^*} ( c_{\Asym} C_A +1) \| \Asym^{\frac{1-\sigma}{2} + \sigma} u_m \|_H) ds
			\\
			&\leq \frac{1}{ \delta} \int_0^t  \| f\|^2_{V^*} ds +  {\delta} \int_0^t ( c_{\Asym} C_A + 1 )^2 \| \Asym^{\frac{1+\sigma}{2}} u_m \|_H^2 ds 
			\\
			&\leq   \frac{ 1}{ \delta}  \| f \|_Y^2 + \frac{1}{2}  \energy[\Asym^\frac{\sigma}{2} u_m](t)
		\end{split}
		\] 
		with $\delta = \half ( c_{\Asym} C_A + 1)^{-2} $ resulting, when used in \eqref{eq:energyestimate3}, in 
		\[ \label{eq:energyestimate4}
		\begin{split}
			{C_A} \energy[u_m](t) + \frac{1 }{2} \energy[\Asym^\frac{\sigma}{2} u_m](t) &-  C_A \energy_0[u_m]  -  \energy_0[\Asym^\frac{\sigma}{2} u_m]  \\
			&\leq  2  \int_0^t \vert \action{ \B[u_m ,\us ] + \B[\us ,u_m ], \phi} \vert ds +  {\delta}^{-1} \| f \|_Y^2.
		\end{split}
		\]
		Using the estimate \eqref{as:bilinestimate1} on $\B$, Young's inequality, the embedding \eqref{as:Linfemb} $X \embed C([0,T];W)$ and \eqref{as:frac_embed} with $\frac{1-\sigma}{2} \leq \frac{\sigma+1}{2}$ we get
		\[ \label{eq:energyestimateB}
		\begin{split}
			2 &\int_0^t \vert \action{ \B[u_m ,\us ] + \B[\us ,u_m ], \phi} \vert ds 
			\\
			&\leq 2 \int_0^t  C_\B  (
			\| \Asym^{\frac{1+\sigma}{2}} u_m \|_H \| \Asym^{\frac{\sigma}{2}} \us \|_H  + 
			\| \Asym^{\frac{1+\sigma}{2}} \us \|_H \| \Asym^{\frac{\sigma}{2}} u_m \|_H   ) \| \Asym^{\frac{1-\sigma}{2}} \phi \|_H ds \\
			&\leq  \int_0^t \frac{C_\B^2}{ \|\us \|_X} (
			\| \Asym^{\frac{1+\sigma}{2}} u_m \|_H^2 \| \Asym^{\frac{\sigma}{2}} \us \|_H^2  + 
			\| \Asym^{\frac{1+\sigma}{2}} \us \|_H^2 \| \Asym^{\frac{\sigma}{2}} u_m \|_H^2 ) + {\|\us \|_X} \| \Asym^{\frac{1-\sigma}{2}} \phi \|_H^2 ds \\
			&\leq   {C_\B^2} \| \us \|_X ( C_T^2 \int_0^t  
			\| \Asym^{\frac{1+\sigma}{2}} u_m \|_H^2 ds   + \max_{s \in [0,t]} \| \Asym^{\frac{\sigma}{2}} u_m \|_H^2 )
			\\ &\qquad \qquad \qquad \qquad \qquad \qquad \qquad \quad + (c_{\Asym} C_A + 1)^2 {\|\us \|_X} \int_0^t  \| \Asym^{\frac{1+\sigma}{2}} u_m \|_H^2 ds 
			\\
			&\leq   \| \us \|_X   ( \max\{C_T^2, 1\} C_\B^2 +  (c_{\Asym} C_A + 1)^2 ) \energy[\Asym^{\frac{\sigma}{2}} u_m](t)   
		\end{split}
		\]
		and by choosing $r_G>0$ such that $\| \us \|_X \leq r_G \leq \left( {4(\max\{C_T^2, 1\} C_\B^2 +  (c_{\Asym} C_A + 1)^2)} \right)^{-1} $ we deduce from \eqref{eq:energyestimate4} that 
		\[ \label{eq:energyestimate5}
		{C_A} \energy[u_m](t) + \frac{1 }{4} \energy[\Asym^\frac{\sigma}{2} u_m](t) 
		\leq   \delta^{-1} \| f \|_Y^2 + ( 1 +C_A c_{\Asym}^2) \| u_m(0) \|^2_{W}.
		\]
		In particular, due to \eqref{eq:energyestimate5}, $\| u_m \|^2_{L^2(0,T;U)} \leq C_U ( \| f \|_Y^2 + \| u_m(0) \|^2_{W} ) $ with $C_U= 4 \max\{\delta^{-1}, (1+ C_A c_{\Asym}^2) \}$. \revision{To estimate $ \| \dt u_m\|_{V^*}$,
		we proceed similarly to \cite[Chapter 7.1]{Evans:2010}. Let $\phi \in V \subset H$ with $\| \phi \|_V \leq 1$. We write $\phi=\phi^1 + \phi^2$ with $\phi^1 \in U_m $ and $\phi^2 \in U_m^\perp$, where the orthogonal complement is taken with respect to the inner product of $H$. Since $\partial_t u_m \in U_m \subset H$, we infer from equation \eqref{eq:galerkin_equation} 
		\[
		\begin{split}
	\action{	\dt u_m , \phi}_V = \action{	\dt u_m , \phi^1} &= \action{ \ff -  \A u_m - \B[u_m ,\us ] - \B[\us ,u_m ], \phi^1} \\
	 &\leq \| \ff -  \A u_m - \B[u_m ,\us ] - \B[\us ,u_m ] \|_{V^*} \| \phi^1 \|_V.
\end{split}
		\]
		Moreover, we have $$\| \phi^1 \|_V^2 = \| \Asym^{\frac{1- \sigma}{2}} \phi^1\|_H^2 = \| \sum_{k=1}^m \lambda_k^{\frac{1- \sigma}{2}} \action{ \phi, \phi_k} \phi_k \|^2_H = \sum^m_{k=1} \lambda_k^{1 - \sigma} \action{\phi , \phi_k}^2 \leq \| \phi \|_V^2, $$ where the last equality follows from Parseval’s theorem. Thus, using \eqref{as:bilinestimate1}, \eqref{as:Linfemb}
				}
		\Margin{2}
		\[ 
		\begin{split}
			\| \dt u_m \|_{V^*} &\leq \| \A u_m \|_{V^*} + \| \B[u, \us] \|_{V^*} + \| \B[\us, u] \|_{V^*} + \| f \|_{V^*} \\
			&\leq C_\A \| u_m \|_U + C_\B  \| \us \|_X  ( \| u_m \|_U +\| u_m \|_W ) + \| f \|_{V^*} \\
			&\leq C_V \| u_m \|_U + \| f \|_{V^*}
		\end{split} 
		\] 
		with $C_V=C_\A  +  C_\B (1 +c_{\Asym})  r_G $. The above estimate, when squared and integrated from $0$ to $T$, leads to 
		\[ 
		\| \dt u_m \|_{L^2(0,T;V^*)}^2 \leq 2 C_U C_V^2 (\| f \|_Y^2 + \| u_m(0) \|^2_{W}) + \| f \|_Y^2 = 2 (C_U C_V^2 + 1) ( \| f \|_Y^2 + \| u_m(0) \|^2_{W} ) . 
		\] 
		Thus, combing this with \eqref{eq:energyestimate5} we obtain the a priori estimate \[ \| u_m \|_X \leq C_G ( \| f\|_Y + \| u_m(0) \|_W) \] for $C_G = \sqrt{\max\{ C_U, 2(C_U C_V^2 + 1)\}} > 0$. 
		\subsubsection{Existence and uniqueness}
		Since $X$ is Hilbert and $u_m$ bounded in $X$, there exists a weakly convergent subsequence of $u_m$ converging weakly to some $u \in X$. 
		
		Let $u_m$ denote this subsequence. Then, we have for all $v \in  L^2(0,T;V) \subset X$
		\[ \label{eq:weakconvergence}
		\begin{split}
		\action{f , v}_Y &= \lim_{n \to \infty} \action{\dt u_m + \A u_m + \B[u_m,\us] + \B[\us, u_m] , v}_Y 
		\\
		&=  \action{\dt u + \A u + \B[u,\us] + \B[\us, u] , v}_Y,
		\end{split}
		\]
		since $\dt , \ \A  , \ \B[\cdot, \us], \ \B[\us, \cdot]$ are (weakly) continuous from $X$ to $Y$, due to the boundedness of the operators shown at the beginning of the proof.
		
		It is therefore left to show that $u(0) = g$ holds, which we do by closely following the argument in \cite[Section 7.1.2 c]{Evans:2010}.
		First of all, since $X \embed C([0,T];W)$, we know that imposing an initial condition as $u(0)$ is sensible. Then by partial integration with respect to time in \eqref{eq:weakconvergence}, we have on the one hand for some $ v \in C^1([0,T];W) \subset L^2(0,T;V)$ with $v(T)=0$
		\[ 
		-\action{ \dt v, u}_Y +   \action{ \A u + \B[u,\us] + \B[\us, u] , v}_Y = \action{f , v }_Y + \action{ u(0), v(0) }_{V^*}
		\] 
		and on the other hand
		\[ 
		\lim_{n \to \infty} -\action{ \dt v, u_m}_Y +   \action{ \A u_m + \B[u_m,\us] + \B[\us, u_m] , v}_Y = \action{f , v }_Y + \action{g, v(0)}_{V^*}
		\] 
		since $u_m(0) \to g$ weakly in $V$. Comparing both expression we see that due to arbitrariness of $v(0)$, we obtain $u(0) = g$.
		\color{black}
		Uniqueness of $u$ follows by standard arguments, due to the linearity of \eqref{eq:full_system_abstract}, see \cite[Section 3.2.3]{QuadraticWave} for details.
	\end{proof}
	\subsection{Contraction argument for the bilinear problem} \label{se:nonlinearproblem}
	In this section, we establish well-posedness of the nonlinear problem \eqref{eq:full_system_abstract_nonlinear} using Theorem \ref{th:linearwellposed}.
	For the proof, we use the following version of the Newton-Kantorovich theorem to treat the quadratic nonlinearity $\B$ with a contraction argument.
	\begin{lemma}\label{th:NewtonKantorowich}(Newton-Kantorovich Theorem) \\
		Let $X$ and $Y$ be Banach spaces and $\F:\mathcal{D}(\F)\subseteq X \to Y$. Suppose that on an open convex set $D_*\subseteq \mathcal{D}(\F)$, $\F$ is Fr\'{e}chet differentiable and $\F'$ is Lipschitz continuous on $D_*$ with Lipschitz constant $K$
		\[
		\|\F'_{u_1}-\F'_{u_2}\|_{\linspa(X,Y)}\leq K \|u_1-u_2\|_{X} , \quad u_1, \, u_2\in D_*.
		\]
		For some $u_*\in D_*$, assume $\Gamma_*:={\F'^{-1}_{u_*}}$ is defined on all of $ \ Y$ and 
		\begin{equation}\label{betaK}
			\beta K \|\Gamma_*\F(u_*)\|_X\leq\frac12, \ \text{ where } \beta = \|\Gamma_*\|_{\linspa(Y,X)}.
		\end{equation}
		Set 
		\begin{equation}\label{rstart_rstarstar}
			r^\pm=\frac{1}{\beta K}(1\pm\sqrt{1-2\beta K\|\Gamma_*\F(u_*)\|_X})
		\end{equation}
		and suppose that $B_{r^-}^X(u_*)\subseteq D_*$. 
		Then, the Newton iterates 
		\begin{equation}\label{Newton}
			u^{(k+1)}=u^{(k)}-{\F'^{-1}_{u^{(k)}}}\F(u^{(k)}), \, k=0,1,2,\ldots 
		\end{equation}
		starting at $u^{(0)}:=u_*$ 
		are well-defined, lie in $B_{r^-}^X(u_*)$ and converge to a solution of $\F(u)=0$, which is unique in $B_{r^+}^X(u_*)\cap D_*$. 
	\end{lemma}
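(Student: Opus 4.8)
The plan is to prove the lemma by Kantorovich's majorant method, comparing the abstract Newton iterates $\{u^{(k)}\}$ in $X$ with the scalar Newton iterates of the quadratic polynomial
\[ \psi(t) := \tfrac{\beta K}{2}\, t^2 - t + \eta, \qquad \eta := \|\Gamma_* \F(u_*)\|_X . \]
Its two roots are exactly the numbers $r^\pm$ from \eqref{rstart_rstarstar}, and the hypothesis \eqref{betaK} (i.e.\ $\beta K \eta \le \tfrac12$) guarantees they are real with $r^-\le r^+$, $r^-+r^+ = \tfrac{2}{\beta K}$ and $\beta K r^- = 1-\sqrt{1-2\beta K\eta}\le 1$; moreover $\psi(0)=\eta>0$, $\psi'(0)=-1$, and the scalar iteration $t_0:=0$, $t_{k+1}:=t_k-\psi(t_k)/\psi'(t_k)$ increases monotonically to $r^-$. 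The first ingredient I would record is the Banach perturbation lemma: whenever $\beta K\|u-u_*\|_X<1$, the bound $\|\Gamma_*(\F'_u-\F'_{u_*})\|_{\linspa(X,X)}\le \beta K\|u-u_*\|_X<1$ shows that $\F'_u=\F'_{u_*}\,(\id+\Gamma_*(\F'_u-\F'_{u_*}))$ is boundedly invertible with $\|(\F'_u)^{-1}\|_{\linspa(Y,X)}\le \beta/(1-\beta K\|u-u_*\|_X)$. Since $\beta K t_k<\beta K r^-\le 1$ for every finite $k$, this applies at each iterate lying in $B_{r^-}^{X}(u_*)$.

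The heart of the argument is an induction proving that $u^{(k)}$ is well defined, $u^{(k)}\in B_{r^-}^{X}(u_*)$, and $\|u^{(k+1)}-u^{(k)}\|_X\le t_{k+1}-t_k$. The only analytic estimate needed is a quadratic remainder bound: since the iteration \eqref{Newton} gives $\F'_{u^{(k-1)}}(u^{(k)}-u^{(k-1)})=-\F(u^{(k-1)})$, the fundamental theorem of calculus yields
\[ \F(u^{(k)})=\int_0^1\big[\F'_{u^{(k-1)}+\tau(u^{(k)}-u^{(k-1)})}-\F'_{u^{(k-1)}}\big]\,(u^{(k)}-u^{(k-1)})\,d\tau, \]
hence $\|\F(u^{(k)})\|_Y\le\tfrac{K}{2}\|u^{(k)}-u^{(k-1)}\|_X^2$ by Lipschitz continuity of $\F'$ on the convex set $D_*$. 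Because $\psi$ is quadratic the exact scalar identity $\psi(t_k)=\tfrac{\beta K}{2}(t_k-t_{k-1})^2$ holds, so combining $\|(\F'_{u^{(k)}})^{-1}\|\le \beta/(1-\beta K t_k)=-\beta/\psi'(t_k)$ with the induction hypothesis $\|u^{(k)}-u^{(k-1)}\|_X\le t_k-t_{k-1}$ gives
\[ \|u^{(k+1)}-u^{(k)}\|_X\le\frac{\beta}{-\psi'(t_k)}\cdot\frac{K}{2}(t_k-t_{k-1})^2=\frac{\psi(t_k)}{-\psi'(t_k)}=t_{k+1}-t_k, \]
and telescoping keeps $\|u^{(k+1)}-u_*\|_X\le t_{k+1}\le r^-$, closing the induction. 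Since $t_k\uparrow r^-$ the scalar increments are summable, so $\|u^{(k+1)}-u^{(k)}\|_X\le t_{k+1}-t_k$ forces $\{u^{(k)}\}$ to be Cauchy in the Banach space $X$; its limit $\bar u$ lies in the closed ball $B_{r^-}^{X}(u_*)\subseteq D_*$, and passing to the limit in $\|\F(u^{(k)})\|_Y\le\tfrac{K}{2}(t_k-t_{k-1})^2\to0$ together with continuity of $\F$ gives $\F(\bar u)=0$, establishing existence.

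For uniqueness I would take any zero $w\in B_{r^+}^{X}(u_*)\cap D_*$ and track $s_k:=\|w-u^{(k)}\|_X$. Using $\F(w)=0$ together with the same remainder identity and the invertibility of $\F'_{u^{(k)}}$ yields $s_{k+1}\le \tfrac{\beta K}{2(1-\beta K t_k)}\,s_k^2=\tfrac{1}{A_k+B_k}\,s_k^2$, where $A_k:=r^--t_k$ and $B_k:=r^+-t_k$ satisfy $1-\beta K t_k=\tfrac{\beta K}{2}(A_k+B_k)$ and, from the scalar recursion, $B_{k+1}=B_k^2/(A_k+B_k)$. Introducing the ratios $\rho_k:=s_k/B_k$, these two identities collapse into the clean quadratic contraction $\rho_{k+1}\le\rho_k^2$; since $\rho_0=\|w-u_*\|_X/r^+<1$ for $w$ strictly inside the ball, $\rho_k\to0$, hence $s_k\to0$ and $w=\bar u$.

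The step I expect to be the main obstacle is precisely the bookkeeping at the boundary of $B_{r^+}^{X}(u_*)$: when $r^-<r^+$ the contraction estimate only delivers $\rho_0<1$ in the open ball, so for a competing zero with $\|w-u_*\|_X=r^+$ exactly one has $\rho_0=1$ and the quadratic recursion stalls; uniqueness on the closed sphere then requires an additional limiting argument (or, in the degenerate case $\beta K\eta=\tfrac12$ where $r^-=r^+$, a separate direct contraction). All other steps reduce to the single quadratic remainder estimate and the elementary monotone behaviour of the scalar majorant, so it is this borderline analysis, rather than any operator-theoretic difficulty, where the real care lies.
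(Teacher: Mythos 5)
The paper offers no proof of this lemma at all (it defers to the cited Kantorovich references), and your majorant-method argument is precisely the classical proof found in those sources, so there is no divergence of approach to discuss. Your core estimates all check out: the perturbation bound $\|(\F'_u)^{-1}\|_{\linspa(Y,X)}\le\beta/(1-\beta K\|u-u_*\|_X)$ via the Neumann series, the quadratic remainder estimate from the Lipschitz hypothesis (legitimate along segments because $D_*$ is convex and the closed ball $B_{r^-}^X(u_*)$ lies in $D_*$), the scalar identity $\psi(t_k)=\tfrac{\beta K}{2}(t_k-t_{k-1})^2$, the telescoping/Cauchy argument, and the uniqueness recursion $\rho_{k+1}\le\rho_k^2$ with $B_{k+1}=B_k^2/(A_k+B_k)$ are all correct.

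The one thing to revise is your reading of the ``main obstacle.'' Uniqueness on the \emph{closed} ball of radius $r^+$ is not a boundary case awaiting a cleverer limiting argument --- it is false, so no argument can close it. With the paper's convention $B_r^X(y)=\{x:\|x-y\|_X\le r\}$, take $X=Y=\R$, $D_*=\R$, $u_*=0$, and $\F(t)=\tfrac{\beta K}{2}t^2-t+\eta$ with, say, $\beta=K=1$ and $\eta=3/8$. Then $\F'_0=-1$, $\Gamma_*=-1$, $\beta K\eta=3/8<\tfrac12$, and every hypothesis of the lemma holds; but the zeros of $\F$ are exactly $r^-=1/2$ and $r^+=3/2$, i.e.\ two distinct zeros lie in $B_{r^+}^X(0)\cap D_*$. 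This is precisely why the classical formulations (including the sources the paper cites) assert uniqueness in the \emph{open} ball of radius $r^+$, together with the closed ball of radius $r^-$; the lemma as transcribed here, with closed balls, is off by exactly this boundary case, and your argument proves the correct (open-ball) statement. Note also that the degenerate case $\beta K\eta=\tfrac12$ needs no separate direct contraction in your scheme: there $r^-=r^+$, hence $B_k=r^+-t_k\to0$, and $\rho_k\le1$ already gives $s_k=\rho_k B_k\le B_k\to0$, so closed-ball uniqueness is true and follows from your recursion as it stands. None of this affects the way the lemma is used later in the paper, where only uniqueness in a neighborhood of the constructed solution is invoked.
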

	A proof of Lemma \ref{th:NewtonKantorowich} can be found in e.g. \cite{Kantorovich1948,KantorovichAkilov}. \revision{With this, for any given $T>0$ we can establish well-posedness of the abstract system, provided the data are sufficiently small.}
	\begin{theorem} \label{th:nonlinear}
		\revision{Let $T>0$, $(f,g) \in L^2(0,T; D(\Asym^{\frac{1-\sigma}{2}} )^*) \times \dom(\Asym^\frac{\sigma}{2})$ and Assumption \ref{se:assumption} hold with $\sigma \in [0,1]$. Then, there exists a $r>0$ so that if} \Margin{3} $$ \| (\ff,g) \|_{L^2(0,T; D(\Asym^{\frac{1-\sigma}{2}} )^*) \times \dom(\Asym^\frac{\sigma}{2})} < r , $$  there exists a unique solution $u \in X := L^2(0,T;D(\Asym^{\frac{1+\sigma}{2}}) )  \cap H^1(0,T; D(\Asym^{\frac{1-\sigma}{2}} )^*)$ of equation \eqref{eq:full_system_abstract_nonlinear}.
	\end{theorem}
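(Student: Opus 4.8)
The plan is to recast \eqref{eq:full_system_abstract_nonlinear} as a root-finding problem $\F(u) = 0$ and apply the Newton–Kantorovich theorem (Lemma \ref{th:NewtonKantorowich}), using Theorem \ref{th:linearwellposed} to control the linearized operator. Writing $W := \dom(\Asym^{\sigma/2})$ and equipping $Y \times W$ with the norm $\|(\ff,\g)\|_{Y \times W} := \|\ff\|_Y + \|\g\|_W$, I would define $\F : X \to Y \times W$ by
\[
\F(u) := \big( \dt u + \A u + \B[u,u] - \ff, \ u(0) - \g \big),
\]
so that $u \in X$ solves \eqref{eq:full_system_abstract_nonlinear} exactly when $\F(u) = 0$. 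The pointwise-in-time trace $u(0)$ is well defined because $X \embed C([0,T];W)$ by \eqref{as:Linfemb}.

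Since $\B$ is bilinear and bounded from $X \times X$ to $Y$ with constant $C_\B C_T$ (established in the proof of Theorem \ref{th:linearwellposed}), the quadratic map $u \mapsto \B[u,u]$ is Fréchet differentiable, and a direct computation gives
\[
\F'_w[h] = \big( \dt h + \A h + \B[h,w] + \B[w,h], \ h(0) \big).
\]
This is precisely the linear operator on the left of \eqref{eq:full_system_abstract} together with the initial trace, evaluated at $\us = w$. Hence, for every $w$ in the open convex set $D_* := \{ u \in X : \|u\|_X < r_G \}$, with $r_G$ as in Theorem \ref{th:linearwellposed}, that theorem makes $\F'_w : X \to Y \times W$ a bounded linear bijection whose inverse is bounded by $C_G$ in view of the a priori estimate \eqref{eq:GalerkinEstimate_Dir}. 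For the Lipschitz property of $\F'$ I would use bilinearity once more:
\[
(\F'_{u_1} - \F'_{u_2})[h] = \big( \B[h,u_1 - u_2] + \B[u_1 - u_2, h], \ 0 \big),
\]
which yields $\|\F'_{u_1} - \F'_{u_2}\|_{\linspa(X, Y \times W)} \leq K \|u_1 - u_2\|_X$ with $K := 2 C_\B C_T$.

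To invoke Lemma \ref{th:NewtonKantorowich} I would center the iteration at $u_* = 0 \in D_*$. Then $\Gamma_* = (\F'_0)^{-1}$ is defined on all of $Y \times W$ with $\beta := \|\Gamma_*\|_{\linspa(Y \times W, X)} \leq C_G$, and since $\B[0,0] = 0$ we have $\F(0) = (-\ff, -\g)$, so $\|\Gamma_* \F(0)\|_X \leq C_G \|(\ff,\g)\|_{Y \times W} < C_G r$. The smallness condition \eqref{betaK} becomes $\beta K \|\Gamma_* \F(0)\|_X \leq C_G^2 K r \leq \tfrac12$, which holds once $r \leq (2 C_G^2 K)^{-1}$. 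Using $1 - \sqrt{1-x} \le x$ one gets $r^- \leq 2\|\Gamma_* \F(0)\|_X \leq 2 C_G r$, so shrinking $r$ further to satisfy $2 C_G r < r_G$ secures the remaining hypothesis $B_{r^-}^X(0) \subseteq D_*$. The lemma then produces a zero of $\F$ in $B_{r^+}^X(0) \cap D_*$, unique there, which is the sought solution $u \in X$ of \eqref{eq:full_system_abstract_nonlinear}.

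The main difficulty is conceptual rather than computational: identifying $\F'_w$ with the linearized operator of Theorem \ref{th:linearwellposed} at $\us = w$, so that invertibility of $\F'_w$ on $D_*$ and the uniform bound $\beta \leq C_G$ are inherited directly from the linear theory. The only genuine bookkeeping is to verify that the three smallness requirements — the contraction threshold $r_G$ for the linearization point, the Newton–Kantorovich bound $r \leq (2 C_G^2 K)^{-1}$, and the inclusion $B_{r^-}^X(0) \subseteq D_*$ — can all be met by a single sufficiently small $r$, which is immediate since each is an upper bound of the form ``$r$ small enough.''
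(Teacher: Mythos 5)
Your proposal is correct and follows essentially the same route as the paper's own proof: the same map $\F$, the same choice of $D_* = \{u \in X : \|u\|_X < r_G\}$, linearization at $u_* = 0$ with invertibility and the bound $\beta \leq C_G$ inherited from Theorem \ref{th:linearwellposed}, the same Lipschitz constant $K = 2C_\B C_T$, and the same smallness thresholds for $r$. The only (welcome) refinement is that you make the inclusion $B_{r^-}^X(0) \subseteq D_*$ explicit via $1 - \sqrt{1-x} \leq x$, giving $r^- \leq 2C_G r$, where the paper merely asserts that $r^- < r_G$ can be arranged.
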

	\begin{proof}
		The proof follows a similar argumentation as the proof of \cite[Theorem 3.2]{QuadraticWave}. For convenience of the reader, we restate it here. As in Theorem \ref{th:linearwellposed}, we use the short notations 
		$Y:= L^2(0,T; D(\Asym^{\frac{1-\sigma}{2}} )^*)$,  $U:= D(\Asym^{\frac{1+\sigma}{2}})$, $V:=   D(\Asym^{\frac{1-\sigma}{2}} )$ and $W:=  \dom(\Asym^\frac{\sigma}{2}) $ 
		
		We seek to apply Lemma \ref{th:NewtonKantorowich} to the map $\F:X \to Y \times W$ defined as
		\[
		\F u := ( \dt u + \A u + \B[u,u], u(0)) - (f, g).
		\]
		We choose the open and convex set $D_* = U^{X}_{r_G}(0):= \{ u \in X : \| u \| < r_G \}$ with $r_G>0$ from Theorem \ref{th:linearwellposed} so that \eqref{eq:full_system_abstract} is well defined for all $\us \in D_*$.
		
		Then, by assumptions \ref{se:assc} \eqref{as:bilinestimate1} and \eqref{as:Linfemb} that yield boundedness of $\B:X\times X \to Y$
		we have
		\[ \label{eq:BFrechetDiff}
		\frac{\| \B[h, h] \|_Y }{ \| h \|_X} \leq \frac{ ( C_\B^2 C_T^2 \| h \|_X^2 \int_0^T \| h \|_U^2   ds )^\half }{ \| h \|_X} \leq {C_\B} C_T \| h \|_X \to 0 \quad ( \| h \|_X \to 0)
		\]
		meaning that $\F$ is Fr\'{e}chet differentiable at each $ \us \in D_*$ with
		\[
		\F'_\us : \xi \mapsto (\dt \xi + \A \xi, \xi(0) ) + ( \B[\xi, \us] + \B[u_*,\xi], 0),
		\]
		so that $ \F'^{-1}_\us : Y \times W \to X $ is well posed for $\us \in D_*$ by Theorem \ref{th:linearwellposed}. 
		
		By the same estimate for $\B$, we have for $u_1, u_2 \in D_*$ and $u \in X$
		\[
		\| (\F'_{u_1} - \F'_{u_2}) u \|_Y \leq \| \B[u, u_1 - u_2] + \B[u_1 - u_2, u] \|_Y \leq 2 C_\B C_T \| u_1 - u_2 \|_{X} \| u \|_{X} 
		\]
		giving Lipschitz continuity of $\F'$ with Lipschitz constant $K:=2 C_\B C_T$.
		
		Next, we choose $\us= 0 \in D_*$, in which case assumption \eqref{betaK} of Lemma \ref{th:NewtonKantorowich} turns out to be
		\[
		\beta K \| - (\dt \cdot + \A \cdot, \cdot(0) ) ^{-1}  (\ff, g) \|_{X} \leq \half  .
		\]
		The a priori estimate from Theorem \ref{th:linearwellposed} gives us $ \beta:= \| {\F'^{-1}_\us} \|_{\linspa{(Y \times W, X)}} \leq C_G $, so that $\| ( \dt \cdot + \A \cdot, \cdot(0))^{-1} \|_{\linspa(Y \times W , X)} \leq C_G$ by applying Theorem \ref{th:linearwellposed} with $\us=0 \in X$. Therefore, choosing 
		$
		r \leq \frac{1}{2 C_G^2 K} > 0
		$
		gives
		$$
		\beta K \| - ( \dt \cdot + \A \cdot, \cdot(0))^{-1} (\ff, g) \|_{X}
		\leq C_G^2 K \| (f,g) \|_{Y \times W} 
		< \half.
		$$
		Choosing additionally $r$ so small that $r^- < r_G$ and hence $B_{r^-}^X(0) \subseteq D_*$, the assumptions of Lemma \ref{th:NewtonKantorowich} hold, and we obtain in particular the stated result.
	\end{proof}
	\revision{We note that the above theorem does not provide a global well-posedness result, as the choice of $r$ depends on the embedding constant $C_T$, which can depend on $T$.} \Margin{3}
	\subsection{Nonhomogeneous boundary conditions} \label{nonhombcs}
In this section, we demonstrate how the results of the previous section can be adapted to establish well-posedness of \eqref{eq:full_system_abstract_nonlinear} in the presence of nonhomogeneous boundary conditions, using standard lifting arguments.

In the abstract formulation \eqref{eq:full_system_abstract_nonlinear} and under Assumption \ref{se:assumption}, no boundary conditions are explicitly imposed. However, when nonhomogeneous boundary conditions are incorporated into the operator formulation, assumption \ref{se:assb} may fail to hold. For instance, the symmetry of the linear operator 
$\Asym$ may be lost, as occurs with the Laplace operator subject to Dirichlet or Neumann boundary conditions.
	
	Therefore, the strategy in the following is to use extension properties of the self adjoint operator $\Asym$.
	Then, for a nonhomogeneous boundary value problem, we look for a solution in the bigger space $ \tild{X} \supset X$ such that $X \embed \tild{X}$ and assume that for a 
	suitable boundary condition operator $\bcop$ defined on $\tild{X}$, we have $\bcop(u)=0$ if $u \in X$. This means $X$ represents the space of functions with homogeneous boundary conditions. Equation \eqref{eq:full_system_abstract_nonlinear} can then be written as
	\[
	\begin{split} \label{eq:full_system_abstract_nonhom}
		\dt u + \tild{\A} u + \tild{\B}[u , u ] &=  \ff \\
		u(0) &= g \\
		\bcop(u) &= h,
	\end{split}
	\]
	where now $\tild{\A}: \tild{X} \to \tild{Y}$ and $\tild{\B}: \tild{X} \times \tild{X} \to \tild{Y}$.
	 To formalize this lifting argument, we make the following assumptions in the case of nonhomogeneous boundary conditions. 
	 \begin{assump}
	 	\label{eq:assumptions}
	 Let Assumption \ref{se:assumption} and Theorem \ref{th:linearwellposed} hold with $Y:= L^2(0,T; D(\Asym^{\frac{1-\sigma}{2}} )^*)$,  $U:= D(\Asym^{\frac{1+\sigma}{2}})$, $V:=   D(\Asym^{\frac{1-\sigma}{2}} )$ and $W:=  \dom(\Asym^\frac{\sigma}{2}) $. Additionally, assume that
		\begin{enumerate}[label=(\roman*),ref=\theassump(\roman*)]
		\item\label{as:a} there are Banach spaces $\tild{U}, \tild{V}, \tild{W}$ such that $U \embed \tild{U}$, $V \embed \tild{V}$ and $W \embed \tild{W}$ and $U,V,W$ are closed subspaces of $\tild{U}, \tild{V}, \tild{W}$, respectively. Define $$\tild{X} := L^2(0,T;\tild{U}) \cap H^1(0,T; {V}^*), \qquad \tild{Y}:= L^2(0,T;V^*).$$
		Moreover, the embedding $ \tild{X} \embed C([0,T];\tild{W})$ holds.
		\item\label{as:d} $\bcop : \tild{X} \to \bcop(\tild{X})$ is linear
		and $u \in \tild{X}$ with $\bcop(u)=0$ implies $u \in X$. 
		\item\label{as:b} $\tild{\A} = \tild{\Asym} + \tild{\Aant} : \tild{X} \to \tild{Y}$ linear is well defined, where $\tild{\Asym}: D(\tild{\Asym}) \subset H \to H$ with $D(\Asym) \embed D(\tild{\Asym})$ and $\tild{\Aant}: D(\tild{\Asym}) \subset H \to H$. Furthermore $\tild{\Asym}\big\vert_{X} = \Asym$ and $\tild{\Aant}\big\vert_{X} = \Aant$, while $\tild{\Aant}:\tild{X} \to \tild{Y}$ is bounded.
		
		$\tild{\B} : \tild{X} \times \tild{X} \to \tild{Y}$ bilinear is well defined, bounded 
	and $\tild{\B}\big\vert_{X \times X} = \B$.
		\item\label{as:c} there exists a unique $\tild{h} \in \tild{X}$ such that $\tild{\Asym} \tild{h} = 0$ and $\bcop(\tild{h}) = h$ 
		with $\| \tild{h} \|_{\tild{X}} \leq c_h \| h \|_{X_\bcop}$ for some Banach space $X_\bcop \subseteq \bcop(\tild{X})$.
	\end{enumerate}
\end{assump}
	We note that, in principle, it is possible to choose $X_\bcop = \bcop(\tild{X})$. However, for the application to nonlinear acoustics discussed in Section~\ref{se:application}, we assume more regularity for $h$ then $\bcop(\tild{X})$ to avoid technical details that would otherwise arise.
	\begin{theorem} \label{th:wellposednonhom}
		Let $T>0$ and Assumptions \ref{se:assumption}, \ref{eq:assumptions} hold.
		\revision{Let $ f \in Y$, $g \in \tild{W}$, $h \in X_\bcop$ with $ \tild{h}(0) - g \in W$. Then there exists a $\tild{r}>0$ such that if }
		\Margin{3}
		$$\| f \|_{{Y}} + \| g \|_{\tild{W}} + \| h \|_{X_\bcop} < \tild{r},
		$$ there exists a unique solution $u \in \tild{X} $ of equation \eqref{eq:full_system_abstract_nonhom}.
	\end{theorem}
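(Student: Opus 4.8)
The plan is to reduce the nonhomogeneous problem to a homogeneous one by subtracting the harmonic lifting provided by Assumption~\ref{as:c}, and then to run the Newton--Kantorovich argument of Theorem~\ref{th:nonlinear}, with the lifting producing one additional bounded \emph{linear} term that I will handle as a small perturbation. Concretely, I would first invoke Assumption~\ref{as:c} to obtain the unique $\tild{h}\in\tild{X}$ with $\tild{\Asym}\tild{h}=0$, $\bcop(\tild{h})=h$ and $\|\tild{h}\|_{\tild{X}}\le c_h\|h\|_{X_\bcop}$, and make the ansatz $u=w+\tild{h}$. Since $\bcop$ is linear (Assumption~\ref{as:d}) and $\bcop$ vanishes on $X$, the condition $\bcop(u)=h=\bcop(\tild{h})$ is equivalent to $\bcop(w)=0$, i.e.\ to $w\in X$, and conversely any $w\in X$ produces a $u$ with $\bcop(u)=h$. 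On $X$ the operators restrict as $\tild{\A}\big\vert_X=\A$, $\tild{\B}\big\vert_{X\times X}=\B$ (Assumption~\ref{as:b}), and $\tild{\Asym}\tild{h}=0$ gives $\tild{\A}\tild{h}=\tild{\Aant}\tild{h}$. Substituting into \eqref{eq:full_system_abstract_nonhom} and expanding $\tild{\B}[w+\tild h,w+\tild h]$ by bilinearity, the problem for $w\in X$ becomes
\[
\begin{split}
\dt w+\A w+\tild{\B}[w,\tild{h}]+\tild{\B}[\tild{h},w]+\B[w,w]&=F,\\
w(0)&=g-\tild{h}(0),
\end{split}
\]
with reduced data $F:=f-\dt\tild{h}-\tild{\Aant}\tild{h}-\tild{\B}[\tild{h},\tild{h}]$. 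Here I would check that $F\in Y$: since $\tild{Y}=L^2(0,T;V^*)=Y$, the three extra summands lie in $Y$ by the definition of $\tild{X}$ and the boundedness in Assumption~\ref{as:b}, with $\|F\|_Y\lesssim\|f\|_Y+\|\tild{h}\|_{\tild{X}}+\|\tild{h}\|_{\tild{X}}^2$, while $g-\tild{h}(0)\in W$ by the hypothesis $\tild{h}(0)-g\in W$.

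Next I would apply Lemma~\ref{th:NewtonKantorowich} to the map $\F:X\to Y\times W$,
\[
\F w:=\bigl(\dt w+\A w+\tild{\B}[w,\tild{h}]+\tild{\B}[\tild{h},w]+\B[w,w],\,w(0)\bigr)-(F,\,g-\tild{h}(0)),
\]
on $D_*=B_{r_G}^{X}(0)$. Exactly as in the proof of Theorem~\ref{th:nonlinear}, boundedness of $\B$ (through \eqref{as:bilinestimate1} and \eqref{as:Linfemb}) yields Fr\'echet differentiability of the quadratic part and Lipschitz continuity of $\F'$ with constant $K=2C_\B C_T$. The additional map $\xi\mapsto\tild{\B}[\xi,\tild{h}]+\tild{\B}[\tild{h},\xi]$ is linear and, by boundedness of $\tild{\B}$ on $\tild{X}\times\tild{X}$ (Assumption~\ref{as:b}), bounded from $X$ into $Y$ with norm at most $2C_{\tild{\B}}\|\tild{h}\|_{\tild{X}}\le 2C_{\tild{\B}}c_h\|h\|_{X_\bcop}$; thus it enters $\F'$ but not $K$.

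The step I expect to be the main obstacle is the invertibility of the linearization $\F'_0:\xi\mapsto(\dt\xi+\A\xi+\tild{\B}[\xi,\tild{h}]+\tild{\B}[\tild{h},\xi],\,\xi(0))$. I cannot simply invoke Theorem~\ref{th:linearwellposed} with $\us=\tild{h}$, because $\tild{h}\in\tild{X}\setminus X$ and the sharp bilinear estimate \eqref{as:bilinestimate1} underlying its energy estimate is unavailable for $\tild{h}$. Instead I would view $\xi\mapsto\tild{\B}[\xi,\tild{h}]+\tild{\B}[\tild{h},\xi]$ as a perturbation of $\xi\mapsto(\dt\xi+\A\xi,\,\xi(0))$, which is invertible from $X$ onto $Y\times W$ with inverse bounded by $C_G$ by Theorem~\ref{th:linearwellposed} applied with $\us=0$. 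Since the perturbation has operator norm $\le 2C_{\tild{\B}}c_h\|h\|_{X_\bcop}<2C_{\tild{\B}}c_h\tild{r}$, a Neumann series argument shows that for $\tild{r}$ small enough $\F'_0$ is invertible with $\beta:=\|{\F'^{-1}_0}\|_{\linspa(Y\times W,X)}\le 2C_G$; by the Lipschitz bound on $\F'$ the same holds for $\F'_{w_*}$ at any $w_*$ close to $0$.

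Finally I would verify the smallness condition \eqref{betaK} at $u_*=0$, where $\F(0)=-(F,\,g-\tild{h}(0))$, so that $\beta K\|\Gamma_*\F(0)\|_X\le\beta^2 K\,(\|F\|_Y+\|g-\tild{h}(0)\|_W)$. Using the bounds above, $\|F\|_Y$ is controlled by $\tild{r}$ and the quadratic term $\tild{r}^2$ via $\|\tild{h}\|_{\tild{X}}\le c_h\|h\|_{X_\bcop}$, so choosing $\tild{r}$ sufficiently small forces this product below $\tfrac12$ and also guarantees $r^-<r_G$, hence $B_{r^-}^{X}(0)\subseteq D_*$. Lemma~\ref{th:NewtonKantorowich} then produces a unique $w\in B_{r^-}^{X}(0)$ with $\F(w)=0$, and $u=w+\tild{h}\in\tild{X}$ is the asserted unique solution of \eqref{eq:full_system_abstract_nonhom}. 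The one point requiring care in this last step is that the smallness of the reduced initial value $\|g-\tild{h}(0)\|_W$ must be drawn from the data smallness; this is precisely the role of the joint hypothesis on $\|f\|_Y+\|g\|_{\tild{W}}+\|h\|_{X_\bcop}$ together with the membership $\tild{h}(0)-g\in W$.
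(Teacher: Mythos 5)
Your proposal is correct, and while it shares the paper's overall skeleton (lift the boundary data via Assumption \ref{as:c}, then apply Lemma \ref{th:NewtonKantorowich} to the remainder in $X$), it differs from the paper's proof in two genuine ways. First, the decomposition: the paper writes $u = u^0 + \tild{u}$, where $\tild{u} = u_Z + \tild{h}$ solves the auxiliary \emph{linear evolution} problem \eqref{eq:dirextension} (via Theorem \ref{th:linearwellposed} with $\us = 0$), so that the boundary datum $h$ \emph{and} the initial datum $g$ are absorbed into $\tild{u}$ and the Newton--Kantorovich problem for $u^0$ has zero initial value and right-hand side $\ff - \tild{\B}[\tild{u},\tild{u}]$; you lift only by the stationary extension $\tild{h}$ and keep both $F$ and $g - \tild{h}(0)$ as data of the fixed-point problem, which is equivalent but saves one linear solve. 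Second, and more substantively, the invertibility of the linearization at $0$: the paper re-runs the Galerkin energy estimate of Theorem \ref{th:linearwellposed}, replacing $\| \us \|_X$ by $\| \tild{u} \|_{\tild{X}}$ in \eqref{eq:energyestimateB}, which tacitly uses a pointwise-in-time analogue of \eqref{as:bilinestimate1} for the tilde spaces (verified in the applications of Section \ref{se:application}, but not literally contained in Assumption \ref{eq:assumptions}); your Neumann-series argument perturbing $(\dt \cdot + \A \cdot, \cdot(0))^{-1}$ needs only Theorem \ref{th:linearwellposed} as a black box at $\us = 0$ plus the stated boundedness of $\tild{\B}:\tild{X}\times\tild{X}\to\tild{Y}$, so it is more modular and in fact closer to the literal assumptions, at the harmless cost of the worse constant $\beta \le 2C_G$. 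Two details you should still write out: the operator norm of $\xi \mapsto \tild{\B}[\xi,\tild{h}]+\tild{\B}[\tild{h},\xi]$ carries the embedding constant of $X \embed \tild{X}$; and bounding $\| g-\tild{h}(0) \|_W$ by $\| g \|_{\tild{W}} + c_h C_T \| h \|_{X_\bcop}$ requires that the $W$- and $\tild{W}$-norms be equivalent on $W$, which holds by the open mapping theorem because Assumption \ref{as:a} makes $W$ a closed subspace of $\tild{W}$ --- a step the paper itself uses silently when passing to \eqref{eq:utildestimate}.
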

	\begin{proof}
		The main idea  of the proof is to deduce a solution $u$ of \eqref{eq:full_system_abstract_nonhom} as $\you = \youn + \youE$ with $\youn \in X$ and $\youE \in \tild{X}$. 
		
		1. We wish to obtain $\youE$ as the unique solution of the linear problem
		\[ \label{eq:dirextension}
		\begin{split}
			\dt \youE + \tild{\Asym} \youE + \tild{\Aant} \youE  &= 0 \\
			\youE(0) &= g \\
			\bcop(\youE) &= h .
		\end{split}
		\]
		
		Using $\tild{h} \in \tild{X}$ from assumption \ref{as:c} 
		and plugging the ansatz $\youE = u_Z + \tild{h}$ into \eqref{eq:dirextension} gives an equation for $u_Z \in X$
		\[ \label{eq:dirichletharmonicextension}
		\begin{split}
			\dt u_Z + \A u_Z &=  - \dt \uD - \tild{\Aant} \uD \\
			u_Z(0) &= g - \tild{h}(0)  \\
			\bcop(u_Z) &= 0,
		\end{split}
		\]
		where we have $\tild{\A} u_Z = \A u_Z$, since $\bcop u_Z = 0$ using assumption \ref{as:d}.
		
		To prove the existence of a unique $u_Z \in X$ solving \eqref{eq:dirichletharmonicextension}, we apply Theorem \ref{th:linearwellposed} with $\us=0$, since by assumptions \ref{as:b}, \ref{as:c}
		$$	( - \dt \uD - \Aant \uD , \ g - \tild{h}(0) ) \in  {Y} \times {W}.$$
		This also yields the a priori estimate $\| u_Z \|_X \leq C_Z ( \| \dt \tild{h} - \tild{\Aant} \tild{h} \|_{Y} + \| g - \tild{h}(0) \|_W )$ for some $C_Z>0$. By construction $\tild{u}:= u_Z + \tild{h}$ indeed fulfills \eqref{eq:dirextension} and by the triangle inequality
		\[ \label{eq:utildestimate} 
		\| \tild{u} \|_{\tild{X}} \leq
		C_Z ( \| \dt \tild{h} \|_Y + \| \tild{\Aant} \|_Y + \| g \|_{\tild{W}} + \| \tild{h}(0) \|_{\tild{W}} ) + c_h \| h \|_{X_\bcop} \leq \tild{C} ( \| g \|_{\tild{W}} + \| h \|_{X_\bcop}), 
		\]
		where $\tild{C}>0$ depends on the operator norms of $\tild{\Aant}, \dt$ and on $c_h, C_T$.
		
		2. We proceed to construct $u^0 \in X$. Plugging the ansatz $\you = u^0 + \youE$ into \eqref{eq:full_system_abstract_nonhom} gives a system for $u^0$
		\newcommand{\prez}{p^0}
		\newcommand{\velz}{v^0}
		\[ 
		\begin{split}
			\dt u^0 + \tild{\A} u^0 + \tild{\B}[u^0 + \youE, u^0 + \youE] &= \ff \\
			u^0(0) &= 0 \\
			\bcop(u^0) &= 0,
		\end{split}
		\]
		which by bilinearity of $\B$ and since $\bcop(u^0)=0$ implies $\tild{\B}[u^0,u^0]=\B[u^0,u^0]$, can be written as
		\[ \label{eq:nonhomdirex}
		\begin{split}
			\dt u^0 + \A u^0 + {\B}[u^0, u^0] + \tild{\B}[u^0, \youE] + \tild{\B}[ \youE, u^0] &= \ff - \tild{\B}[\youE,\youE] \\
			u^0(0) &= 0 \\
			\bcop(u^0) &= 0.	\end{split}
		\]
		The strategy is now similar to the proof of Theorem \ref{th:nonlinear}. We show that for fixed $\tild{u} \in \tild{X}$ there exists a unique $u^0 \in X$ such that $\G u^0 = 0$ with $\G: X \to Y \times W$ defined as
		\[ \G u^0 := (\dt u^0	+ \A u^0 + \B[u^0, u^0] + \tild{\B}[u^0, \youE] + \tild{\B}[ \youE, u^0] , u^0(0)) - (  \ff - \tild{\B}[\youE,\youE],0).
		\]
		First of all, we note that $\tild{\B}[\youE, \youE] \in Y$, so that $\G$ is well defined. 
		Furthermore, we have that
		\[
		\G'_{\tild{\uss}} \cdot = (\dt \cdot + \A \cdot + \B[ \cdot, \tild{\uss}] + \B[\tild{\uss}, \cdot] + \tild{\B}[\cdot, \youE] + \tild{\B}[ \youE, \cdot], \cdot(0) )  
		\]
		for each $\tild{\uss} \in X$, since the only nonlinear term $\B[u^0,u^0]$ in $\G$ can be differentiated like the corresponding term in $\F$ in the proof of Theorem \ref{th:nonlinear}, see equation \eqref{eq:BFrechetDiff}. The Lipschitz continuity of $\G'$ with constant $K=2 C_\B C_T$ follows in the same way.
		
		To show that
		$\G'^{-1}_\uss: Y \times W \to X$ is well posed with $ \tild{ \uss} = 0 \in \tild{D_*}:=U^X_{\tild{r_G}}(0)$ for some $\tild{r_G}>0$ small enough, we can proceed analogously to the proof of Theorem \ref{th:linearwellposed}.

An exception is that in the energy estimate of $\B$, cf. \eqref{eq:energyestimateB}, we now have to treat the term  $\tild{\B}[u^0,\tild{u}]+ \tild{\B}[\tild{u},u^0]$. 
		We can replace the $ \| \us \|_{X}$ terms by $ \| \tild{u} \|_{\tild{X}}$ in \eqref{eq:energyestimateB} and the estimate can be carried out analogously in this case, since the assumptions \eqref{as:bilinestimate1}, \eqref{as:Linfemb} are assumed to hold for the tilde spaces, respectively. 
		
		In the end, we obtain a unique $u^0 \in X$ and constants $\tild{r}_G, \tild{C_G} > 0$ for which the a priori estimate 
		$ \| u^0 \|_X \leq \tild{C_G} \| f - \tild{\B}[\tild{u}, \tild{u}] \|_Y$
		is valid if $ \tild{u} \in B^{\tild{X}}_{\tild{r_G}}(0)$, which can be achieved due to \eqref{eq:utildestimate} by assuming that $\| g \|_{\tild{W}} + \| h \|_{X_\bcop}$ is small enough.
		
		In order to apply Lemma \ref{th:NewtonKantorowich} to $\G$, we note that, since $\beta \leq \tild{C_G}$
		\[
		\beta K \| - ( \dt \cdot + \A \cdot + \B[ \cdot,\tild{u}] + \B[\tild{u},\cdot], \cdot(0) )^{-1}  (\ff - \tild{\B}[\tild{u}, \tild{u}], 0) \|_{X} \leq \tild{C_G}^2 K \| \ff - \tild{\B}[\tild{u}, \tild{u}] \|_Y < \half 
		\]
		and $r^- < \tild{r_G}$ can be ensured if $\tild{r}$ is chosen small enough. This follows from smallness of $\| f \|_Y$ in combination with the boundedness of $\tild{\B}$ and the estimate \eqref{eq:utildestimate}.
		
		It is left to show that $u = u^0 + \tild{u}$ is unique. If $\hat{u}$ is another solution to \eqref{eq:full_system_abstract_nonhom}, then it can be decomposed into $\hat{u}=\hat{u}^0 + \hat{\tild{u}}$, where $\hat{\tild{u}}$ satisfies \eqref{eq:dirextension} and $\hat{u}^0:=\hat{u} - \tild{u}$ \eqref{eq:nonhomdirex}.    Since these equations have unique solutions, it holds $u = \hat{u}$.
		\color{black}
	\end{proof}
	\section{Application to nonlinear acoustics} \label{se:application}
	To apply the results of section \ref{se:abstractsystem} to equation \eqref{eq:full_system}, we note that the differential operators in \eqref{eq:full_system} can be split into a linear and bilinear part. Therefore, equation \eqref{eq:full_system}
	can be reformulated as
	\[ \label{eq:full_system_dir}
	\begin{split} 
		\dt u + \A u + \B[u ,u ] &= \ff \\
		u(0) &= g
	\end{split}
	\]
	with $u = (\pre, \vel)^\transpose, \ \ff = (\pref, \velf)^\transpose, \ g = (\pre_0, \vel_0)^\transpose$. The linear part of \eqref{eq:full_system} is represented in \eqref{eq:full_system_dir} by
	\[ \label{eq:def_A}
	\A u =   \Asym u + \Aant u =   - \begin{pmatrix}
		\pvis \lapl \pre \\ \vvis \lapl \vel
	\end{pmatrix} + 
	\begin{pmatrix}
		\divv \vel \\ \grad \pre
	\end{pmatrix}
	\]
	and the bilinear part of \eqref{eq:full_system} is defined in \eqref{eq:full_system_dir} with $z= (q, w)^\transpose$ using the product rule as
	\[ \label{eq:def_B}
	\B[u,z] = 
	\begin{pmatrix}
		\alpha \ba[u,z] + \beta \bb[u,z] \\ \gamma \bc[u,z] + \delta \bd[u,z]
	\end{pmatrix}
	:=
	\begin{pmatrix}
		\alpha q \divv v + \beta w \cdot \grad p \\
		\gamma q \grad p + \delta (\grad v) w
	\end{pmatrix},
	\]
	with $\alpha= \nonlinpar, \ \beta = \gamma = \delta = 1$, but in the following analysis we allow $\alpha, \beta, \gamma, \delta$ to be arbitrary nonzero real numbers.
	Note that in $\B$, first and zero order differential operators are applied to the first and second argument $u$ and $z$, respectively.
	\subsection{Preliminaries}
	We assume throughout this work that $\Omega$ is a bounded, simply connected domain and has at least $C^{2,1}-$boundary.
	In the following, we often write function spaces as $L^2, \ H^1, \ \dots$ without specifying the domain. If we do not specify the domain, then by default these spaces have the domain $\domain$. To note down vector valued function spaces, we use the notation $X^{j}:= X \times \dots \times X$, $j \in \N,$ with $X$ a Banach space and equip these spaces with the component-wise norm, i.e., $\| x \|_{X^j} = \sqrt{ \| x_1\|_{X}^2 + \dots + \| x_j \|^2_X }$. 
	An element $x \in X^j$ is often denoted as $x=(x_1, ..., x_j)^\transpose$.
	We use the following version of Hölder's inequality
	\begin{align}
		\| u v w \|_{ L^1 } \leq \| u \|_{ L^p } \| v \|_{ L^q} \| w \|_{L^r} \quad u \in L^p, \ v \in L^q, \ w \in L^r, \quad 1 = \frac{1}{p} + \frac{1}{q} + \frac{1}{r}
	\end{align}
	for $p,q,r\in[1,\infty)$, where $L^p$ denote the usual Lebesgue spaces. For integrals of $u \in L^1$ over $\Omega$, we use the shorthand notation $\int_\Omega u$, while for integrals over $\partial \Omega$, we write $\int_{\boundary \domain} u dS$, where $dS$ indicates the integration over a boundary.
	
	With $H^m$ we denote the usual Sobolev space of integrable functions with $m$-th order distributional derivatives in $L^2$. For $s>0$ we denote by $H^s$ the fractional order Sobolev spaces obtained by interpolation with the usual norm and seminorm. 
	For $u \in (H^1)^j$ we denote by $\jac u \in (L^2)^{j \times j}$ the (weak) Jacobian matrix. If $u \in H^1_0$, which is defined as the completion of $C^\infty_0$ in $H^1$, we choose $\| u \|_{H^1_0} = \| \grad u \|_{L^2} = \sqrt{\action{\grad u, \grad u}}$, where $\action{\cdot, \cdot}$ denotes the $L^2$ inner product. In this case, for smooth enough domains we have by Poincaré's inequality $ \| u \|_{L^2} \leq C_P \| \grad u \|_{(L^2)^d} $ with constant $C_P>0$ and space dimension $d \in \{2,3\}$.
	As $H^{-s}$ we denote the dual space of $H^s_0$. 
	
	Let $ \confun:= \{ u \in L^2 : u(x)=c, \ c \in \R \} $ be the closed subspace of constant functions of $L^2$. If $H$ is a Hilbert space and $M \subset H$ a subspace of $H$, we denote the quotient space as $H_{\diagup M}$.
	\subsubsection{Dirichlet Laplacian} 
	The Dirichlet Laplacian $- \lapl_D : \dom( - \lapl_D) \subset L^2 \to L^2$ can be obtained by a Friedrich's extension, see \cite[Theorem 10.19]{schmudgen2012}. If we assume $\Omega$ to be $C^2$-regular, we have $\dom(- \lapl_D) = H^2 \cap H^1_0 $ by elliptic regularity. Furthermore, $- \lapl_D$ is self adjoint and has a spectral decomposition as required in assumption \ref{se:assb} for $\Asym $ with $H=L^2$. 
	
	Moreover, the following characterization of the fractional domains of $- \lapl_D$ for $s \in [0,2]$ holds
	\[ \label{eq:dirchar}
	\dom( - \lapl_D^{\frac{s}{2}}) = \Hdir^s := \begin{cases}
		H^s \quad &  0 < s < \half,\\
		H^\half_{00} \quad &s = \half,\\
		H^s_0 \quad &\half < s < 1,\\
		H^s \cap H^1_0  \quad &1 \leq s \leq 2,
	\end{cases}
	\]
	see \cite[Theorem 3.1]{fefferman2022} in combination with \cite[Proposition 4.1]{bonito2013}. The space denoted by $H^\half_{00}$ consists of all $u \in H^\half$ such that $\int_\Omega d^{-1}  u^2 < \infty$, where $d(x) : \Omega \to \R$ stands for the Euclidean distance from $x$ to $\partial \Omega$.  We define $\Hdir^{-s}:=(\Hdir^s)^*$.
	
	Note that $\Hdir^s \embed H^s$ for $s \geq 0$ with embedding constant $C_{\Hdir}>0$, which follows from \cite[Proposition 2.2]{antil2017} and the fact that $H^1_0 \embed H^1$.
	\subsubsection{Neumann Laplacian}
	The classical Neumann Laplacian $- \lapl_N : \dom(\lapl_N) \subset L^2 \to L^2$ can be obtained by a Friedrich's extension, see \cite[Theorem 10.20]{schmudgen2012}.	If $\Omega$ is assumed to be $C^2$-regular, we have
	\[
	\dom(- \lapl_N):= \{ u \in H^2 :  \T( \grad u) \cdot n = 0  \},
	\]
	where $\T:H^s \to H^{s - \half}(\boundary \domain)$ if $s>\half$ denotes the classical trace operator, see \cite[Chapter 5.5]{Evans:2010}. 
	$-\lapl_N$ is also self adjoint and has almost a spectral decomposition as required in assumption \ref{se:assb} if $H=L^2$ with the exception that the first eigenvalue vanishes, i.e., $\lambda_1 = 0$. This can be overcome by considering $-\lapl_N:\dom( - \lapl_N)_{\diagup \confun} \subset L^2_{\diagup \confun} \to L^2 _{\diagup \confun}$, i.e., by factoring out constant functions. We write for this operator $-\lapl_{\tild{N}}$ with $\dom(-\lapl_{\tild{N}}) = \dom( - \lapl_N)_{\diagup \confun}$. Then, this Neumann Laplacian $-\lapl_{\tild{N}}$ does not have a vanishing eigenvalue, since the eigenspace corresponding to $\lambda_1$ for $-\lapl_N$ equals $C$.
	
	Therefore, fractional powers of $-\lapl_{\tild{N}}$ can be defined as in the Dirichlet case, and we have for $s \in [0,2]$ 
	\[ \label{eq:neuchar} 
	\dom(- \lapl_{\tild{N}}^\frac{s}{2}) = \Hneu^s 
	:= \begin{cases}
		\{ u \in H^s: \int_\Omega u  = 0 \} \quad &s < \frac{3}{2}, \\
		\{ u \in H^\frac{3}{2} : \int_\Omega  u = 0 , \  | \grad u | \in H^\half_{00} \} \quad &s = \frac{3}{2},  \\
		\{ u \in H^s: \int_\Omega u = 0,  \ \T( \grad u) \cdot n = 0 \} \quad &s > \frac{3}{2},
	\end{cases}
	\]
	see \cite[Proposition 2.4]{antil2017} for $s<1$ and \cite[Remark 2.10]{kim2020} for $s \geq 1$. Note that the condition $\int_\domain u = 0$ is equivalent to factoring out constant functions, and we define $\Hneu^{-s}:=(\Hneu^s)^*$. \\
	
	\subsubsection{Hodge/Lions Laplacian}
	For Hodge/Lions boundary conditions, we only cover  the case $d = 3$ for simplicity, so that the vector cross product $\times$ is a map  $ \times : \R^d \times \R^d \to \R^d$. We consider a vector valued Laplacian acting on functions with $d=3$ components fulfilling the Hodge/Lions boundary conditions as in \cite{mitrea2004, monniaux2013}, denoted by $- \lapl_M$ in the following. This operator can be constructed via a self adjoint Friedrich's extension with domain
	\[ \dom( - \lapl_M) = \{ v \in H^1_\perp : \divv v \in H^1, \ \curl (\curl v) \in (L^2)^d, \ (\curl v ) \times n = 0 \text{ on } \boundary \Omega \} \]
	with $$
	\dom( - \lapl_M^\half)=H^1_\perp:=\{ v \in (L^2)^d : \divv v \in L^2, \ \curl v \in (L^2)^{d}, \ v \cdot n  = 0 \text{ on } \partial \Omega\}
	$$ in case of a Lipschitz-regular domain $\Omega$, see \cite[Section 4]{monniaux2013}. When $\Omega$ is assumed to be more regular, one can obtain more regular function spaces. This is shown in the next Lemma, justifying our notation $H^1_\perp$. Note that this construction of $\lapl_M$ and the representation of $D(-\lapl_M)$ is linked to the vector calculus formula $\lapl v = \grad \divv v - \curl \curl v$. 
	\begin{lemma} \label{le:divcurl}
		Let $\Omega$ be $C^{1,1}$-regular and $d=3$. Then for $a \in H^\half(\partial \domain) $ and $ b \in H^\half(\boundary \domain)^{d}$ the continuous embeddings
		\[ \label{eq:embedH1}
		\begin{split}
			\{  &v \in (L^2)^d : \divv v \in L^2, \ \curl v \in (L^2)^{d}, \ v \cdot n = a \text{ on }  \boundary \domain \} \embed (H^1)^d,
			\\
			\{  &v \in (L^2)^d : \divv v \in L^2, \ \curl v \in (L^2)^{d}, \ v \times n = b \text{ on }  \boundary \domain \} \embed (H^1)^d.
		\end{split}
		\]
		hold.
		If $\Omega$ is $C^{2,1}$-regular, then for $c \in H^\frac{3}{2}(\boundary \domain)$ the continuous embedding
		\[ \label{eq:embedH2}  \{ v \in (H^1)^d : \divv v \in H^1 , \ \curl v \in (H^1)^{d}, \ v \cdot n = c \text{ on }  \boundary \domain  \} \embed (H^2)^d \]
		holds,
		leading to $\dom( - \lapl_M) \embed (H^2)^d$.
	\end{lemma}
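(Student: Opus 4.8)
The plan is to reduce every statement to a homogeneous boundary condition and then invoke the Friedrichs--Gaffney div-curl inequality. Throughout I read each left-hand space with its natural graph norm, for the first line e.g. $\|v\|_{(L^2)^d} + \|\divv v\|_{L^2} + \|\curl v\|_{(L^2)^d} + \|a\|_{H^\half(\boundary\domain)}$, so that the $\|v\|_{(L^2)^d}$ term which will reappear on the right of the Gaffney estimate is harmless. The simple-connectedness of $\domain$ assumed in the Preliminaries is used to exclude a nontrivial space of harmonic fields and so to obtain the full gain of one derivative.

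First I would treat the two $H^1$ embeddings in \eqref{eq:embedH1}. Given $a \in H^\half(\boundary\domain)$ (resp.\ $b \in H^\half(\boundary\domain)^d$), the trace theorem on a $C^{1,1}$ domain supplies a lifting $w \in (H^1)^d$ with $w \cdot n = a$ (resp.\ $w \times n = b$) and $\|w\|_{(H^1)^d} \leq C\|a\|_{H^\half(\boundary\domain)}$ (resp.\ $\leq C\|b\|$); since $w \in H^1$ we automatically have $\divv w, \curl w \in L^2$. Setting $u := v - w$ yields a field with homogeneous normal (resp.\ tangential) trace and with $\divv u,\ \curl u \in L^2$ controlled by the data. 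The Friedrichs--Gaffney inequality on a $C^{1,1}$ simply connected domain,
\[ \|u\|_{(H^1)^d} \leq C\big(\|u\|_{(L^2)^d} + \|\divv u\|_{L^2} + \|\curl u\|_{(L^2)^d}\big), \]
valid whenever $u\cdot n = 0$ or $u\times n = 0$ on $\boundary\domain$, then gives $u \in (H^1)^d$, and hence $v = u + w \in (H^1)^d$ with the asserted bound. The two cases differ only in whether the normal or the tangential version of Gaffney is used.

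For the $H^2$ embedding \eqref{eq:embedH2} I would exploit the identity $\lapl v = \grad\divv v - \curl\curl v$ noted after the definition of $\lapl_M$: under $\divv v \in H^1$ and $\curl v \in (H^1)^d$ one has $\grad\divv v,\ \curl\curl v \in (L^2)^d$, so $\lapl v \in (L^2)^d$. The target is the second-order div-curl regularity estimate on a $C^{2,1}$ domain,
\[ \|v\|_{(H^2)^d} \leq C\big(\|v\|_{(L^2)^d} + \|\divv v\|_{H^1} + \|\curl v\|_{(H^1)^d} + \|v\cdot n\|_{H^\frac{3}{2}(\boundary\domain)}\big), \]
obtained either by citing the higher-order div-curl theory directly, or by bootstrapping \eqref{eq:embedH1}: lift $c$ to $w \in (H^2)^d$ with $w\cdot n = c$ and $\|w\|_{(H^2)^d} \leq C\|c\|_{H^\frac{3}{2}(\boundary\domain)}$, reduce to $u = v - w$ with vanishing normal trace, and run a tangential difference-quotient argument to upgrade the first-order bound to $H^2$. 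Finally, to derive $\dom(-\lapl_M)\embed (H^2)^d$, I would first apply the tangential case of \eqref{eq:embedH1} to $\curl v$ — which has zero divergence, $L^2$ curl, and vanishing tangential trace $(\curl v)\times n = 0$ — obtaining $\curl v \in (H^1)^d$, and then apply \eqref{eq:embedH2} to $v$ itself (with $c = 0$) to conclude $v \in (H^2)^d$.

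The main obstacle is the $H^2$ step. Unlike the first-order case, the single scalar condition $v\cdot n = c$ is not by itself an elliptic (Lopatinski--Shapiro) boundary condition for the vector Laplacian, so $\lapl v \in (L^2)^d$ together with $v\in(H^1)^d$ does \emph{not} immediately give $v\in(H^2)^d$; the surplus regularity of $\divv v$ and $\curl v$ is precisely what compensates, and it is exactly this that the higher-order div-curl inequality encodes. Correctly establishing (or locating a reference for) that inequality on a $C^{2,1}$ domain, together with the compatibility of the $C^{2,1}$ normal-trace lifting, is the delicate point; the boundary smoothness and simple-connectedness of $\domain$ both enter here to guarantee the clean two-derivative gain.
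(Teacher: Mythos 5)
Your proposal is correct and follows essentially the same route as the paper: both rest on the div--curl (Friedrichs--Gaffney) embedding theory of Amrouche--Bernardi--Dauge--Girault, which the paper cites directly in its nonhomogeneous, higher-order form (their Theorems 2.9, 2.12, Remark 2.14, and Corollary 2.15 with $m=2$), whereas you reconstruct the nonhomogeneous cases from the homogeneous inequalities by trace lifting and note that the $H^2$ case needs exactly that higher-order div--curl result. Your final deduction of $\dom(-\lapl_M) \embed (H^2)^d$ --- applying the tangential first-order embedding to $\curl v$ and then the $H^2$ embedding \eqref{eq:embedH2} to $v$ itself --- is precisely the paper's argument.
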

	\begin{proof}
		The embeddings \eqref{eq:embedH1}, \eqref{eq:embedH2} can be deduced from \cite{amrouche1998} using \cite[Theorems 2.9, 2.12, Remark 2.14 and Corollary 2.15 with $m=2$]{amrouche1998}. 
		
		To prove $\dom( - \lapl_M) \embed (H^2)^d$ note that for $w := \curl v$ with $v \in \dom( - \lapl_M)$ we have $ \divv w = \divv \curl v = 0 \in L^2$ (in a distributional sense), $\curl w = \curl \curl v \in (L^2)^d$ and $w \times n = \curl v \times n = 0$ on $\boundary \domain$. Therefore, applying  \eqref{eq:embedH1} gives us $w \in (H^1)^d$, that is $\curl  v \in (H^1)^d$. Thus, using \eqref{eq:embedH2} we obtain $\dom( - \lapl_M) \embed (H^2)^d$.
	\end{proof}
	To the best knowledge of the author, no concrete characterization of $\dom( - \lapl_M^s)$ for $s \notin \N$ in terms of fractional Sobolev spaces is proven in the literature. This is ultimately the reason, why we consider nonhomogeneous Hodge/Lions boundary conditions only for the case $\sigma=1$, cf. Theorems \ref{th:wellposedneu} and \ref{th:dir-hod}.
	
	Finally, the following Lemma is a well-posedness result for $- \lapl_M$ following from \cite[Theorem 1.2]{mitrea2004}. 
	\begin{lemma} \label{le:hodgeregularity}
		Let $\Omega$ be $C^1$-regular and simply connected with $d=3$. Then, for $- \half < s < \half$ and any $\chi \in (H^s)^d, \ \xi \in H^{s - \half}(\partial \domain)$ and $$\nu \in H^s_\times:= \{ v \in  H^{s - \half}(\partial \Omega)^d : v = u \times n \text{ on } \boundary \domain \text{ for some } u \in (H^s)^d \text{ with } \curl u \in (H^s)^d \} $$ there exists a unique $v$ with 
		$$ \curl v \in (H^s)^{d}, \quad \curl \curl v \in (H^s)^d, \quad \divv v \in H^{s+1}$$ 
		such that
		\[ \label{eq:hodgeproblem}
		\begin{split} 
			- \lapl v &= \chi \\
			v \cdot n &= \xi \text{ on }\boundary \Omega\\
			(\curl v) \times n &= \nu \text{ on } \boundary \Omega.
		\end{split}
		\] 
		Additionally, it holds for some $C>0$ \[ \label{eq:hodgewellposed} 
		\begin{split}
		\| \curl v \|_{(H^s)^d} + \| \curl \curl v \|_{(H^s)^d} &+ \| \divv v \|_{H^{s+1}} \\ &\leq C ( \| \chi \|_{(H^s)^d} + \| \xi \|_{H^{s- \half}(\boundary \domain)} + \| \nu \|_{H^{s - \half}(\boundary \domain)^d} ). 
				\end{split}
		\]
	\end{lemma}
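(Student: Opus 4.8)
The plan is to recognize \eqref{eq:hodgeproblem} as the vector Poisson problem for the Hodge/Lions Laplacian and to deduce the claim directly from \cite[Theorem 1.2]{mitrea2004}, which establishes well-posedness of exactly this boundary value problem on Sobolev scales over $C^1$, simply connected domains in $\R^3$. The main work is therefore not a fresh construction but a careful translation of the statement of \cite[Theorem 1.2]{mitrea2004} into the notation and function spaces used here, together with a verification that the hypotheses match. The structural fact underlying everything is the vector calculus identity $-\lapl v = \curl\curl v - \grad\divv v$, which is what makes the prescribed data natural.

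First I would match the boundary operators. The pair \emph{normal component} $v\cdot n$ and \emph{tangential part of the curl} $(\curl v)\times n$ are precisely the Hodge-type boundary data appearing in Mitrea's formulation. The interior datum $\chi \in (H^s)^d$ corresponds to the right-hand side of the vector Poisson equation, $\xi \in H^{s-\half}(\partial\Omega)$ prescribes $v\cdot n$, and $\nu \in H^s_\times$ prescribes $(\curl v)\times n$; the space $H^s_\times$ is defined exactly so that $\nu$ lies in the range of the tangential-curl trace, i.e.\ so that it is an admissible, compatible datum. I would confirm that for $-\half < s < \half$ these are the regularity indices covered in \cite{mitrea2004}, this being the range in which the relevant boundary layer and potential operators are invertible. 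Existence together with the a priori bound \eqref{eq:hodgewellposed} is then read off directly from the conclusion of the cited theorem, which returns a field whose curl, double curl and divergence lie in the stated spaces and obey the quantitative estimate in terms of $\chi,\xi,\nu$; no argument beyond norm bookkeeping is required here.

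For uniqueness I would argue directly rather than only quote the reference. If $v$ solves the homogeneous problem ($\chi=\xi=\nu=0$), then testing $\curl\curl v - \grad\divv v = 0$ against $v$ and integrating by parts makes both boundary contributions vanish, precisely because $(\curl v)\times n = 0$ and $v\cdot n = 0$, leaving $\|\curl v\|_{L^2}^2 + \|\divv v\|_{L^2}^2 = 0$. Hence $\curl v = 0$ and $\divv v = 0$, which already forces uniqueness of the triple $(\curl v, \curl\curl v, \divv v)$; invoking that $\Omega$ is simply connected then upgrades a curl-free, divergence-free field with vanishing normal trace to $v = 0$, giving uniqueness of $v$ itself. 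This is exactly where the simple-connectedness hypothesis enters, by trivializing the relevant space of harmonic fields.

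The main obstacle I anticipate is essentially bookkeeping: \cite{mitrea2004} is phrased in the language of potential theory and differential forms on nonsmooth domains, so the delicate step is to reconcile the trace spaces, in particular the precise meaning of $H^s_\times$ and of the half-integer boundary Sobolev scale $H^{s-\half}(\partial\Omega)$, and to confirm that the invertibility interval stated there is exactly $-\half < s < \half$. A secondary subtlety worth flagging is that \eqref{eq:hodgewellposed} controls $\curl v$, $\curl\curl v$ and $\divv v$ rather than $v$ in an $H^s$ norm; I would check that this is precisely what the cited theorem delivers and that it is the form needed for the subsequent construction of $-\lapl_M$ and its fractional domains.
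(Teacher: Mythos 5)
Your proposal follows essentially the same route as the paper: both reduce the lemma to \cite[Theorem 1.2]{mitrea2004} with $p=2$ and $\mathcal{M}=\R^3$ (so that $\Omega$ is a Lipschitz domain), identify $v\cdot n$ and $(\curl v)\times n$ with Mitrea's Hodge-type boundary data and $H^s_\times$ with the admissible trace space, verify that the smoothness window for this setting is exactly $-\half<s<\half$, and obtain the estimate \eqref{eq:hodgewellposed} from the conclusion of that theorem; the paper makes the last step explicit by noting that Fredholm solvability of index zero yields boundedness of the solution operator $(\chi,\xi,\nu)\mapsto v$.

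The one point where you genuinely diverge is uniqueness, for which you propose a self-contained energy argument instead of quoting the reference. That argument correctly isolates the role of simple connectedness (a curl-free, divergence-free field with vanishing normal trace is a gradient of a Neumann-harmonic function, hence zero), but as stated it is only justified for $s\geq 0$: for $-\half<s<0$ one merely knows $\curl v\in (H^s)^d$ and $\curl\curl v\in (H^s)^d$, so the quantities $\|\curl v\|_{L^2}$ and the pairing of $\curl\curl v-\grad\divv v$ with $v$ used in your integration by parts are not defined without an additional regularity or bootstrap step. This is a removable blemish rather than a gap in the overall proof, since the cited theorem already gives uniqueness for simply connected domains (its kernel consists of harmonic fields, which is a topological, regularity-independent space and trivial here), and the paper relies on exactly that fact.
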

	\begin{proof}
		The statement is a special case of \cite[Theorem 1.2]{mitrea2004}. Within the notation of \cite{mitrea2004}, we set $p=2$ and choose $\mathcal{M} = \R^3$ with Euclidean metric as manifold. Then $\Omega$ is a Lipschitz domain of $\R^3$ and in the case of $p=2$ and $C^1$-regular $\domain$ the valid choices of $s$ in \cite[Theorem 1.2]{mitrea2004} according to \cite[(1.1)]{mitrea2004} are characterized by
		\[ - \half < s < \half, \quad 0 < \half - \frac{s}{3} < 1 . \]
		The estimate \eqref{eq:hodgewellposed} follows from the fact that according to \cite[Theorem 1.2]{mitrea2004} the boundary value problem \eqref{eq:hodgeproblem} is Fredholm solvable of index zero, which in particular implies that the solution operator $(\chi, \xi, \nu ) \mapsto v$ corresponding to \eqref{eq:hodgeproblem} is bounded.
		\color{black}
	\end{proof}
	\subsection{Dirichlet-Dirichlet boundary conditions}	
	In order to obtain well-posedness of \eqref{eq:full_system} under the boundary condition \eqref{bc-a}, we consider
	\[ \label{eq:full_system_dir1}
	\begin{split} 
		\dt u + \A u + \B[u ,u ] &= \ff \\
		u(0) &= g \\
		\bcop_D(u) &= h ,
	\end{split}
	\]
	where $\bcop_D: (H^s)^{d+1} \to H^{s - \half}(\boundary \domain)^{d+1}$ for $s>\half$ denotes the operator containing the classical trace operator $\T$ in each component, i.e., $\bcop_D(p,v)= (\T(p), \T(v_1), \dots, \T(v_d))^\transpose$.
	\begin{theorem} \label{th:wellposeddir}
		Let $T>0$, $\sigma \in [\half, 1]$, and 
		\[ f \in L^2(0,T; H^{{\sigma-1}})^{d+1}, \ \ g \in (H^{{\sigma}})^{d+1}, \ h \in H^1(0,T;H^{\sigma + \half}(\boundary \Omega) )^{d+1} 
		\] small enough with $(\tild{h}(0) - g) \in (\Hdir^{{\sigma}})^{d+1}$, where $\tild{h}$ is the Dirichlet harmonic extension of $h$. Then, there exists a unique solution 
		$$u \in L^2(0,T;H^{{\sigma+1}})^{d+1} \cap H^1(0,T;\Hdir^{{\sigma-1}})^{d+1} $$ of equation \eqref{eq:full_system_dir1}.
	\end{theorem}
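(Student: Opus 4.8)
The plan is to recognize \eqref{eq:full_system_dir1} as a concrete instance of the abstract nonhomogeneous problem \eqref{eq:full_system_abstract_nonhom} and then invoke Theorem \ref{th:wellposednonhom}. Concretely, I would take $H = (L^2)^{d+1}$, let $\Asym = \diagmatrix(-\pvis \lapl_D, -\vvis \lapl_D, \dots, -\vvis \lapl_D)$ be the vector Dirichlet Laplacian with viscosities as in \eqref{eq:def_A}, and let $\Aant u = (\divv \vel, \grad \pre)^\transpose$. Since $\pvis, \vvis > 0$ are constant, the fractional domains satisfy $\dom(\Asym^{s/2}) = (\Hdir^s)^{d+1}$ with equivalent norms, so by \eqref{eq:dirchar} the abstract spaces become $U = (\Hdir^{1+\sigma})^{d+1}$, $V = (\Hdir^{1-\sigma})^{d+1}$, $W = (\Hdir^\sigma)^{d+1}$, while the enlarged spaces are $\tild{U} = (H^{1+\sigma})^{d+1}$, $\tild{V} = (H^{1-\sigma})^{d+1}$, $\tild{W} = (H^\sigma)^{d+1}$, with $X_\bcop = H^1(0,T;H^{\sigma+\half}(\boundary\domain))^{d+1}$. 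Because $\sigma \in [\half,1]$ gives $1+\sigma \in [\tfrac32,2]$, the space $\Hdir^{1+\sigma} = H^{1+\sigma}\cap H^1_0$ (and similarly $\Hdir^\sigma$, $\Hdir^{1-\sigma}$) is a closed subspace of the corresponding $H$-space, so the proof reduces to verifying Assumptions \ref{se:assumption} and \ref{eq:assumptions} for these choices.

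For Assumption \ref{se:assumption}, item \ref{se:assa} is immediate and \ref{se:assb} uses the spectral decomposition of $-\lapl_D$ recorded in the Dirichlet Laplacian subsection; the skew symmetry \eqref{as:anti} follows by integration by parts, since for $u = (\pre,\vel) \in \dom(\A)$ with homogeneous Dirichlet data one has $\action{\Aant u, u} = \int_\domain (\pre \divv \vel + \grad \pre \cdot \vel) = \int_{\boundary\domain} \pre\,\vel\cdot\normalvec\,dS = 0$, and $\Aant : \dom(\Asym^\half) = (H^1_0)^{d+1} \to H$ is bounded because it contains only first-order derivatives. The crucial item is the bilinear estimate \ref{se:assc}. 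Using $\Hdir^s \embed H^s$ it suffices to bound, for each of the four terms $\ba,\dots,\bd$ in \eqref{eq:def_B}, an integral of the schematic form $\int_\domain (\text{first-order derivative of }u)\, w\, v$, where the derivative of the first argument is controlled by $\|u\|_{H^{1+\sigma}}$, $w$ is the zeroth-order second argument, and $v$ the test function.

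Here is where the restriction $\sigma \in [\half,1]$ enters, and I expect it to be the main obstacle. I would apply the three-factor Hölder inequality with exponents dictated by the Sobolev embeddings $H^{\sigma} \embed L^{q}$, $\tfrac1q = \tfrac12 - \tfrac\sigma3$, and $H^{1-\sigma}\embed L^{q'}$, $\tfrac1{q'} = \tfrac12 - \tfrac{1-\sigma}3$, in the binding dimension $d=3$. Two factors (the derivative of $u$, which sits in $H^\sigma$, and $w\in H^\sigma$) use the first embedding and the test function $v\in H^{1-\sigma}$ uses the second, so the reciprocal exponents sum to $\tfrac32 - \tfrac{\sigma+1}{3}$, which is $\le 1$ precisely when $\sigma \ge \half$; on the bounded domain $\domain$ the slack for $\sigma > \half$ is absorbed by $L^p \embed L^{p'}$. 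This yields \eqref{as:bilinestimate1} with the required triple $\|\Asym^{(\sigma+1)/2}u\|_H \|\Asym^{\sigma/2}w\|_H \|\Asym^{(1-\sigma)/2}v\|_H$, the threshold $\sigma=\half$ being sharp in this approach (the case $d=2$ is strictly easier and imposes no genuine restriction).

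For Assumption \ref{eq:assumptions}, the enlarged operators $\tild{\Asym}$ (defined on $H^2$ without boundary conditions, so $\dom(\Asym)\embed\dom(\tild\Asym)$), $\tild{\Aant}$ and $\tild{\B}$ are the same differential expressions; they are bounded on the tilde spaces by the same Hölder/Sobolev estimates and restrict to $\A,\B$ on $X$, which gives \ref{as:b}, while \ref{as:a} (including the embedding $\tild X \embed C([0,T];\tild W)$) follows from the interpolation structure exactly as \eqref{as:Linfemb}. Item \ref{as:d} is the statement that $\bcop_D(u)=0$ forces homogeneous Dirichlet data, i.e. $u\in(\Hdir^{1+\sigma})^{d+1}$, by \eqref{eq:dirchar}. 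The remaining point \ref{as:c} is the Dirichlet harmonic extension: letting $\uD$ solve $\lapl \uD = 0$ with $\bcop_D(\uD) = h$, elliptic regularity and trace theory lift $h\in H^{\sigma+\half}(\boundary\domain)$ to $\uD\in H^{\sigma+1}(\domain)$, and since the extension commutes with $\dt$ and $H^{\sigma+1}\embed L^2 \embed \Hdir^{\sigma-1}$, the $H^1$-in-time datum $h$ produces $\uD\in\tild X$ with $\|\uD\|_{\tild X}\le c_h\|h\|_{X_\bcop}$; the hypothesis $\uD(0)-g\in(\Hdir^\sigma)^{d+1}$ supplies the compatibility $g-\uD(0)\in W$ required in \eqref{eq:dirichletharmonicextension}. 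With all hypotheses verified, Theorem \ref{th:wellposednonhom} yields the unique solution in $\tild X = L^2(0,T;(H^{1+\sigma})^{d+1}) \cap H^1(0,T;(\Hdir^{\sigma-1})^{d+1})$, which is precisely the asserted regularity.
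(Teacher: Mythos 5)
Your proposal is correct and follows essentially the same route as the paper: verify Assumption \ref{se:assumption} with $H=(L^2)^{d+1}$, the diagonal Dirichlet Laplacian $\Asym$ and skew-symmetric $\Aant=(\divv\cdot,\grad\cdot)^\transpose$, prove the bilinear estimate \eqref{as:bilinestimate1} via H\"older and the embedding $H^s\embed L^{6/(3-2s)}$ (your exponent bookkeeping $\tfrac32-\tfrac{\sigma+1}{3}\le 1 \iff \sigma\ge\half$ is exactly the paper's condition), then verify Assumption \ref{eq:assumptions} with the tilde spaces $(H^{1+\sigma})^{d+1}$, etc., and the bounded Dirichlet harmonic extension $D^\lapl$, and conclude by Theorem \ref{th:wellposednonhom}. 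The only cosmetic deviations (e.g.\ taking $\dom(\tild\Asym)=(H^2)^{d+1}$ rather than $(H^{1+\sigma})^{d+1}$, and citing the interpolation/Gelfand structure for $\tild X\embed C([0,T];\tild W)$ without spelling out the union-of-embeddings argument) do not change the substance.
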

	\begin{proof}
		We first check Assumption \ref{se:assumption} choosing $H:=(L^2)^{d+1}$ as underlying Hilbert space with scalar product $\action{ u, v} = \int_{\Omega} u \cdot v.$
		\begin{enumerate} 	[leftmargin=*]
		\item Assumption \ref{se:assa}: Follows since $L^2$ is separable and infinite dimensional. 
		\item 
		Assumption \ref{se:assb}:
		We define \[ \Asym : \dom( - \lapl_D)^{d+1} \subset H \to H, \ u=(p,v_1, \dots, v_d)^\transpose \mapsto  (
		- \zeta \lapl_D p , - \mu \lapl_D v_1 , \dots ,  - \mu \lapl_D v_d )^\transpose
		, \] 
		which is self adjoint, since $ - \lapl_D$ on $\dom(- \lapl_D)$ is. Since $H^1_0 \embed H^1$ compactly for $C^2$-regular domains and $(L^2)^{d+1}$ is infinite dimensional, self adjointness is equivalent to the desired spectral decomposition, see \cite[Proposition 5.12]{schmudgen2012}. For the smallest eigenvalue $\lambda_1$ of $\Asym$ we have $\lambda_1> \min\{\zeta, \mu\} C_P^{-1}$.
		Since $\dom( - \lapl_D^\frac{s}{2}) = (L^2, H^2 \cap H^1_0)_s$ by Remark \ref{re:fractional}, we obtain $\dom(\Asym^\frac{s}{2}) = (\Hdir^s)^{d+1}$ using the characterization \eqref{eq:dirchar} of $D(-\lapl_D^s)$.
		
		For $\Aant : (H^1_0)^{d+1} \subset (L^2)^{d+1} \to (L^2)^{d + 1}, \ u=(p, v)^\transpose \mapsto (\divv v, \grad p)^\transpose$ we have by partial integration, cf. \cite[Theorem D.8]{schmudgen2012},
		\[ 
		\action{\Aant u , \tild{u}} = \int_\Omega \divv v \tild{p} + \grad p \cdot \tild{v} = - \int_\Omega \grad \tild{p} \cdot v + \divv \tild{v} p + \int_{\partial \Omega} (v \tild{p} + p \tild{v}) \cdot n dS  = \action{u, - \Aant \tild{u}}
		\]
		for $u , \tild{u} \in (H^1_0)^{d+1} = \dom(\Asym^\half) \supseteq \dom(\Asym)$. Choosing $u=\tild{u}$ and using symmetry of $\action{\cdot, \cdot}$ yields the required relation \eqref{as:anti}.
		
		Boundedness of $\Aant : (H^1_0)^{d+1} \to (L^2)^{d + 1}$ with $C_{\Aant} \leq 1$ follows, since $\| \grad p \|_{(L^2)^d} = \| p \|_{H^1_0}$ and $\| \divv v \|_{(L^2)} \leq \| v \|_{(H^1_0)^{d}}$ yields $ \| \Aant u \|_{(L^2)^{d+1}} \leq \| u \|_{(H^1_0)^{d+1}}$.
		
		\item Assumption \ref{se:assc}: We define $\B$ as in \eqref{eq:def_B}. To obtain the estimate \eqref{as:bilinestimate1} note that $  H^s \embed L^{\frac{6}{3-2s}}$ with constant $C_{\embed}(s, \domain)>0$ for $d \leq 3$ and $0 \leq s < \frac{3}{2}$, see e.g. \cite{dinezza2012} and the references therein. So using H\"older's inequality with $p_1 = p_2 = \frac{6}{3-2 \sigma}, p_3 = \frac{6}{3-2 (1-\sigma)} $ gives
		\[ \label{eq:estimateBdir}
		\begin{split}
			\left \vert \int_\Omega \B[u,w] \cdot v \right \vert  
			&\leq		C(\alpha,\beta,\gamma,\delta) \| \grad u \|_{(L^{p_1})^{d+1}} \| w \|_{(L^{p_1})^{d+1}} \| v\|_{(L^{p_3})^{d+1}} \\
			&\leq
			C(\alpha,\beta,\gamma,\delta) C_{\embed}^3 \| \grad u \|_{(H^{{\sigma}})^{d+1}} \| w \|_{(H^\sigma)^{d+1}} \| v\|_{(H^{1-\sigma})^{d+1}} 
			\\
			&\leq
			C(\alpha,\beta,\gamma,\delta) C_{\embed}^3 \|u \|_{H^{({\sigma+1}})^d} \| w \|_{(H^\sigma)^{d+1}} \| v \|_{(H^{1-\sigma})^{d+1}} 
			\\
			&\leq C(\alpha,\beta,\gamma,\delta) C_{\embed}^3 C_{\Hdir}^3 \| u \|_{(\Hdir^{{\sigma+1}})^{d+1}} \| w \|_{(\Hdir^{\sigma})^{d+1}} \| v \|_{(\Hdir^{{1-\sigma}})^{d+1}}
		\end{split}
		\]
		with $C(\alpha,\beta, \gamma, \delta)>0$,
		since
		$ {3 -  2 \sigma} + {3 - 2 \sigma } + {3 - 2 ( 1 - \sigma} ) \leq 6 $
		is fulfilled for $\sigma \geq \half$. 
		
		\end{enumerate}
		Hence, we can apply Theorem \ref{th:nonlinear} under the assumptions that $f,g$ are sufficiently small with $g \in \Hdir^\sigma$ to obtain well-posedness of \eqref{eq:full_system_dir1} if $h=0$ with the spaces $U=(\Hdir^{\sigma+1})^{d+1}$, $W=(\Hdir^\sigma)^{d+1}$ and $V=(\Hdir^{1-\sigma})^{d+1}$, yielding
		${X} = L^2(0,T;\Hdir^{\sigma+1})^{d+1} \cap H^1(0,T; \Hdir^{\sigma-1})^{d+1}$ and $ Y = L^2(0,T;\Hdir^{\sigma-1})^{d+1}.$ 
		\\
		
		To treat the nonhomogeneous case $h \neq 0$, we additionally check assumption \ref{eq:assumptions} in order to apply Theorem \ref{th:wellposednonhom}. 
		\begin{enumerate}	[leftmargin=*]
	\item Assumption \ref{as:a}:
		We choose $\tild{U}=(H^{1+\sigma})^{d+1}$, $\tild{V}=(H^{1-\sigma})^{d+1}$, $\tild{W}=(H^\sigma)^{d+1}$ and  
		the required embeddings follow naturally, since $\Hdir^s \embed H^s$ for $s \geq 0$. 
		This gives us $\tild{X} = L^2(0,T;H^{{\sigma+1}})^{d+1} \cap H^1(0,T;\Hdir^{{\sigma-1}})^{d+1}$ and $\tild{Y}=Y$.
		
		Combining two Gelfand embeddings with $\cup$ we obtain
		\begin{align}
		(L^2(0,T;\Hdir^{1+s}) \cap H^1(0,T;\Hdir^{s - 1}) ) \cup (	L^2(0,T;H^{1+s}) 
		\cap H^1(0,T;(H^{1-s})^*) ) \\ \embed C(0,T;\Hdir^s) \cup  C(0,T;H^s) = C(0,T;H^s)
	\end{align}
which implies, since $(H^{1-s})^* \embed \Hdir^{s-1}$ for $s \leq 1$,
	\begin{align}
	(L^2(0,T;\Hdir^{1+s}) \cap H^1(0,T;\Hdir^{s - 1}) ) \cup (	L^2(0,T;H^{1+s}) 
	\cap H^1(0,T;\Hdir^{s - 1}) ) \embed C(0,T;H^s).
\end{align}
This leads to
\begin{align}
	(L^2(0,T;\Hdir^{1+s}) \cup   	L^2(0,T;H^{1+s}) 
	 ) \cap H^1(0,T;\Hdir^{s - 1}) \embed C(0,T;H^s), 
\end{align}
		which gives the modified embedding $\tild{X} \embed C(0,T;\tild{W})$.
		
		\item Assumption \ref{as:d}: $\bcop_D$ is linear and 
		$u \in \tild{X}$ with $\bcop_D(u) = 0$ implies $u \in X$. This follows, since $(H^1_0)^{d+1} \subset X$ and the fact that functions $u \in H^1$ that fulfill $\mathcal{T}(u)=0$ are in $H^1_0$, c.f \cite[Chapter 5.5, Theorem 2]{Evans:2010}. 
			
\item	Assumption \ref{as:b}:	
		For $\tild{\Asym}$ we choose $\dom(\tild{\Asym})=(H^{1+\sigma})^{d+1} \subset H$ and define $\tild{\Asym},\tild{\Aant}$ with the same formulas as $\Asym,\Aant$ earlier in the proof, so that $\tild{\Asym}\big\vert_{ X} = \Asym$ and $\tild{\Aant}\big\vert_{ X} = \Aant$ follow naturally.
		
		For the well definedness of $\tild{\Asym}$, we note that $-\lapl_D$ can be seen as an operator mapping functions from $H^{s+1} \to (H^{1-s})^* \subseteq \Hdir^{s - 1}$ for $s\leq 1$. 
		
		Boundedness of $\tild{ \Aant }:\tild{X} \to \tild{Y}$ holds, since for $u \in \tild{X}$ we have
		$$ \| \tild{\Aant} \tild{u} \|_{{Y}} \leq C_* \| \tild{\Aant} \tild{u} \|_{L^2(0,T;L^2)^{d+1}} \leq C_* C_\Aant \| \tild{u} \|_{L^2(0,T;H^1_0)^{d+1}} \leq C_* C_\Aant \| \tild{u} \|_{\tild{X}},$$ where $C_*$ stands for the embedding constant of $ L^2(0,T;L^2)^{d+1} \embed {Y}$.
		
		We define $\tild{\B}$ via the same formula as $\B$ earlier in the proof, so that $\tild{\B}\big\vert_{ X \times X}=\B$ holds naturally.
		
		Note that estimate \eqref{as:bilinestimate1} also holds for the tilde norms as one can see in \eqref{eq:estimateBdir}. Therefore, by the same argument as in the proof of Theorem \ref{th:linearwellposed} we conclude boundedness of $\tild{\B}:\tild{X} \times \tild{X} \to Y$.  
\item	Assumption \ref{as:c}: To construct the demanded extension $\tild{h}$ we use the bounded harmonic Dirichlet extension operator $D^{\lapl}: H^{s - \half}(\boundary \Omega) \to H^{s}$ for $s > \half$ as used in \cite{kaltenbacher2011b}. Defining with $h=(h_1,...,h_{d+1})^\transpose \in H^1(0,T;H^{\sigma + \half}(\boundary \Omega) )^{d+1} $ 
		$$\tild{h} := ( D^{\lapl} h_1, ... , D^{\lapl} h_{d+1} )^\transpose \in H^1(0,T;H^{\sigma+1})^{d+1}$$ we have by construction $\tild{A} \tild{h} = \lapl \tild{h} = 0$ with $\bcop_D(\tild{h})=h$. Due to boundedness of $D^\lapl$ and the embedding $H^1(0,T;H^{\sigma+1}) \embed \tild{X}$ it holds for some $C,c_h>0$
		$$ \| \tild{h} \|_{\tild{X}} \leq C \| \tild{h} \|_{H^1(0,T;H^{\sigma+1})^{d+1}} \leq c_h \| h \|_{H^1(0,T;H^{\sigma+\half}(\boundary \domain) )^{d+1}}. $$
		The compatibility condition $\tild{h}(0) - g \in {W}$ is fulfilled by the assumptions specifying the valid choices for $g,h$ in the above theorem.
	\end{enumerate}
Hence, choosing $f,g,h$ small enough we can apply Theorem \ref{th:wellposednonhom}.
	\end{proof}	
	Note that for $\sigma=1$ we obtain the results from \cite{QuadraticWave} if $h=0$. For $\sigma>\half$ the condition $\tild{h}(0) - g \in (\Hdir^{{\sigma}})^{d+1}$ translates into $\bcop_D(h(0))=\bcop_D(g)$, which is a natural compatibility assumption. In the border case $\sigma=\half$ we obtain well-posedness with least regular data as possible. In this case there does not exist a trace of the initial condition, but in order to compensate for this, the condition $\tild{h}(0) - g \in H^\half_{00}$ is required. 
	\subsection{Neumann-Hodge/Lion boundary conditions}
	In order to obtain well-posedness of \eqref{eq:full_system} under boundary condition \eqref{bc-b}, we consider
	\[ \label{eq:full_system_neu1}
	\begin{split} 
		\dt u + \A u + \B[u ,u ] &= \ff \\
		u(0) &= g \\
		\bcop_N(u) &= h, 
	\end{split}
	\]
	where $\bcop_N:(H^s)^{d+1} \to H^{s - \frac{3}{2}}(\boundary \domain) \times H^{s - \frac{1}{2}}(\boundary \domain) \times H^{s - \frac{3}{2}}(\boundary \domain)^{d} $ incorporates the Neumann-Hodge/Lions boundary conditions, i.e.,
	$$\bcop_N(u) = \bcop_N(p,v) = ( \T(\grad p) \cdot n , \ \T(v) \cdot n, \ \T(\curl v) \times n)^\transpose .$$ 
	\begin{theorem} \label{th:wellposedneu}
		Let $T>0$, $d=3$, and 
		$$f \in L^2(0,T; (L^2_{\diagup \confun})^* \times (L^2)^d), \ g \in (H^1)^{d+1}, \ h \in H^1(0,T;H^{\half}(\boundary \domain) \times H^\frac{3}{2}(\boundary \domain) \times H_\times^\half )
		$$ 
		small enough with $ ( \tild{h}(0) - g ) \in  H^1 \times H^1_\perp$, where $\tild{h}$ is the harmonic extension of $h$ with respect to $\bcop_N$. Then, there is a unique solution 
		$$u \in  L^2(0,T;H^{2}_{\diagup \confun} \times (H^2)^d) \cap H^1(0,T; (L^2_{\diagup \confun})^* \times (L^2)^d) $$ of \eqref{eq:full_system_neu1}. 
	\end{theorem}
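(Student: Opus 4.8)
The plan is to fix $\sigma = 1$ and recast \eqref{eq:full_system_neu1} into the abstract nonhomogeneous form \eqref{eq:full_system_abstract_nonhom}, verifying Assumptions \ref{se:assumption} and \ref{eq:assumptions} so that Theorem \ref{th:wellposednonhom} applies. I would take $H := L^2_{\diagup \confun} \times (L^2)^d$ as the underlying Hilbert space, set the self-adjoint part $\Asym := \diagmatrix(-\pvis \lapl_{\tild{N}}, -\vvis \lapl_M)$ and the first-order part $\Aant(p,v) := (\divv v,\ \grad p)^\transpose$. With $\sigma = 1$ the associated domains are $U = \dom(\Asym) \embed H^2_{\diagup\confun}\times (H^2)^d$, where the velocity block uses Lemma \ref{le:divcurl}; $V = \dom(\Asym^{0}) = H$; and $W = \dom(\Asym^{1/2}) = \Hneu^1 \times H^1_\perp$. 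These reproduce exactly the solution space $L^2(0,T;H^{2}_{\diagup\confun}\times (H^2)^d)\cap H^1(0,T;(L^2_{\diagup\confun})^*\times (L^2)^d)$ and the data spaces in the statement.

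For Assumption \ref{se:assb} I would note that $-\lapl_{\tild{N}}$ and $-\lapl_M$ are self-adjoint with compact resolvent — the first by \eqref{eq:neuchar}, the second since $\dom(-\lapl_M)\embed (H^2)^d\embed\embed (L^2)^d$ by Lemma \ref{le:divcurl} and Rellich — so $\Asym$ carries the required spectral decomposition, with a strictly positive smallest eigenvalue coming from the Poincaré--Wirtinger inequality on the quotient for the pressure block and from simple connectedness (no nontrivial harmonic fields with $v\cdot n = 0$) for the Hodge block. Skew-symmetry \eqref{as:anti} follows by integration by parts as in the Dirichlet proof, the boundary terms $\int_{\boundary\domain}(v\,\tild p + p\,\tild v)\cdot n\,dS$ vanishing because $v\cdot n = 0$ is built into $\dom(\Asym^{1/2})$; boundedness of $\Aant$ on $\dom(\Asym^{1/2})$ is clear since $\|\grad p\|_{L^2}$ and $\|\divv v\|_{L^2}$ are controlled by the $\Hneu^1\times H^1_\perp$ norm. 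For the bilinear bound \eqref{as:bilinestimate1} at $\sigma=1$ I would estimate $\int_\domain \B[u,w]\cdot v$ by Hölder with the derivative factor $\grad u\in H^1\embed L^6$, the zero-order factor $w\in H^1\embed L^6$, and the test factor $v\in L^2$, valid since $\tfrac16+\tfrac16+\tfrac12\le 1$ for $d=3$; the domain characterizations then convert these Sobolev norms into fractional-power norms of $\Asym$.

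For the lifting I would choose $\tild{U} = H^2_{\diagup\confun}\times (H^2)^d$, $\tild{V} = V = H$, and $\tild{W} = H^1_{\diagup\confun}\times (H^1)^d$, so that Assumption \ref{as:a} holds with the continuity $\tild{X}\embed C([0,T];\tild{W})$ obtained by the interpolation/Gelfand-triple argument of Theorem \ref{th:wellposeddir}. Assumption \ref{as:d} holds because $\bcop_N$ is linear and its kernel is precisely the space of functions satisfying the homogeneous Neumann--Hodge/Lions conditions: $p\in H^2$ with $\int_\domain p = 0$ and $\grad p\cdot n = 0$ recovers $\dom(-\lapl_{\tild{N}})$, while $v\in (H^2)^d$ with $v\cdot n = 0$ and $(\curl v)\times n = 0$ recovers $\dom(-\lapl_M)$. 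The extensions $\tild{\Asym},\tild{\Aant},\tild{\B}$ are defined by the same differential expressions on $\tild{X}$: $\tild{\Asym}$ maps $\tild{U}\to H = V^*$ pointwise in time, and the boundedness of $\tild{\Aant}$ and $\tild{\B}$ on the tilde spaces is inherited from the estimates above, giving Assumption \ref{as:b}.

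The crux is Assumption \ref{as:c}: constructing a harmonic lift $\tild{h} = (\tild{p},\tild{v})$ with $\tild{\Asym}\tild{h} = 0$, $\bcop_N\tild{h} = h$, and $\tild{h}\in\tild{X}$. The pressure part is the standard Neumann harmonic extension $\lapl\tild{p}=0$, $\grad\tild{p}\cdot n = h_1\in H^{1/2}(\boundary\domain)$, solvable (the quotient absorbing the mean-value compatibility) with $\tild{p}\in H^2$ by elliptic regularity. The velocity part requires solving $\lapl\tild{v}=0$, $\tild{v}\cdot n = h_2$, $(\curl\tild{v})\times n = h_3$, and here I would first invoke Lemma \ref{le:hodgeregularity} with $\chi = 0$ to obtain a solution with base regularity, then bootstrap to $(H^2)^d$ through the div--curl embeddings \eqref{eq:embedH1}--\eqref{eq:embedH2} of Lemma \ref{le:divcurl}. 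Because the extension operators are time-independent and linear, the temporal regularity $h\in H^1(0,T;\cdot)$ transfers to $\tild{h}\in H^1(0,T;\tild{U})\embed\tild{X}$ together with the estimate $\|\tild{h}\|_{\tild{X}}\le c_h\|h\|_{X_\bcop}$. I expect this bootstrap to be the main obstacle: Lemma \ref{le:hodgeregularity} supplies solvability and bounds only in the range $-\tfrac12<s<\tfrac12$, strictly below the $(H^2)^d$ regularity demanded by $\tild{U}$, so matching the boundary data $h_2\in H^{3/2}(\boundary\domain)$ and $h_3\in H^{1/2}_\times$ to full interior $H^2$ regularity via the div--curl embeddings — in particular upgrading $\curl\tild{v}$ to $(H^1)^d$ — is the delicate step. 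With $\tild{h}$ obtained, the assumed compatibility $\tild{h}(0)-g\in W$ and smallness of the data let me conclude by Theorem \ref{th:wellposednonhom}.
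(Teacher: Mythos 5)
Your proposal is correct and follows essentially the same route as the paper: the same Hilbert space $L^2_{\diagup \confun} \times (L^2)^d$ and operator choices at $\sigma = 1$, the same verification of Assumptions \ref{se:assumption} and \ref{eq:assumptions}, and the same lifting construction (Neumann harmonic extension for the pressure; Lemma \ref{le:hodgeregularity} at $s=0$ followed by the div--curl bootstrap of Lemma \ref{le:divcurl} for the velocity). The step you flag as the main obstacle is resolved in the paper exactly as you anticipate: applying the embedding \eqref{eq:embedH1} to $w := \curl \tild{h}_v$ (which satisfies $\divv w = 0$, $\curl w \in (L^2)^d$, $w \times n = h_3$) upgrades $\curl \tild{h}_v$ to $(H^1)^d$, after which \eqref{eq:embedH2} yields the required $(H^2)^d$ regularity.
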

	\begin{remark}
		We remark that factoring out constant functions from the solution space is reasonable in the case of nonlinear acoustics, since equation \eqref{eq:full_system} only describes how fluctuations of physical variables evolve over time.  
	\end{remark}
	\begin{proof}
		We first check assumption \ref{se:assumption} with $H = L^2_{\diagup \confun} \times (L^2)^d$ as underlying Hilbert space with scalar product $\action{ u, v} = \int_{\Omega} u \cdot v$. Let $\sigma \in (\half,1]$.
		\begin{enumerate}[leftmargin=*]
		\item Assumption \ref{se:assa}: Holds, since $L^2_{\diagup \confun}$ is still infinite dimensional and a Hilbert space.

		
		\item Assumption \ref{se:assb}: We define
		\[ \Asym : \dom(\Asym) \subset H \to H, \ u=(p,v) \mapsto  \begin{pmatrix}
			- \zeta \lapl_{\tild{N}} p \\ - \mu \lapl_M v 
		\end{pmatrix} \] 
		with $
		\dom(\Asym) = \dom( - \lapl_N)_{\diagup \confun} \times \dom( - \lapl_M) 
		$. Since $- \lapl_N$ and $- \lapl_M$ are self adjoint, we obtain that $\Asym$ is also self adjoint. Since $H^1_{\diagup \confun} \embed L^2$ and $H^1_\perp \embed  (L^2)^d$ compactly, see \cite[Theorem 2.8]{amrouche1998}, we obtain that $\Asym$ has pure point spectrum, see \cite[Proposition 5.12]{schmudgen2012}. Since we exclude constant solutions from $- \lapl_N u = \lambda u$, we only obtain strictly positive eigenvalues of $- \lapl_N$. 
		For $- \lapl_M$ we remark that assuming $\Omega$ is simply connected, yields that there is no eigenvalue $\lambda_1$ with $\lambda_1=0$, since a Poincaré like inequality holds in $\dom( - \lapl_M^\half)$, cf \cite[Corollary 3.16]{amrouche1998}. The result that the kernel of $- \lapl_M$ is trivial if $\Omega$ is simply connected also follows from \cite[Theorem 1.2]{mitrea2004} for less regular domains. 
		
		For $\Aant : \dom(\Asym) \subset H \to H, \ u=(p, v)^\transpose \mapsto (\divv v, \grad p)^\transpose$ by partial integration as in the proof of Theorem \ref{th:wellposeddir} we have
		$\action{\Aant u , \tild{u}} 
		= \action{u, - \Aant \tild{u}},$
		for $u , \tild{u} \in  \dom(\Asym^\half) = H^1_{\diagup \confun} \times H^1_\perp $, since $v \cdot n$ vanishes on $\partial \domain$ for $ v \in H^1_\perp$. The required boundedness of $\Aant$ follows again as in the proof of Theorem \ref{th:wellposeddir}, since $H^1_\perp \embed (H^1)^d$ can be equipped with the $H^1$-norm.
		
		\item Assumption \ref{se:assc}: To show \eqref{as:bilinestimate1} we use the definition of $\B$ and estimate \eqref{eq:estimateBdir} as obtained in the proof of Theorem \ref{th:wellposeddir}. Then, it remains to show that the $(H^r)^{d+1}$-norms can be estimated by the $D(\Asym^\frac{r}{2})$ norms. Indeed, we have $\dom(\Asym^\frac{s}{2}) \embed (H^s)^{d+1}$, 
		since 
		\[ \label{eq:neumannembed}
		\dom(\Asym^{s}) = ( (L^2)^{d+1}, \dom(\Asym))_{{s}} \embed (L^2, H^2)_s^{d+1} = (H^{2s})^{d+1}
		\] 
		using Remark \ref{re:fractional}. Note that $\dom(\Asym) \embed (H^2)^{d+1}$ follows from $\dom(- \lapl_N) \embed H^2$ and Lemma \ref{le:divcurl} relying on the assumed $C^{2,1}$-regularity of $\Omega$.  
	\end{enumerate}
	Hence, we can apply Theorem \ref{th:nonlinear} under the assumptions that $f,g$ are sufficiently small with $f \in L^2(0,T;\dom(\Asym^\frac{1-\sigma}{2})^*), \ g \in \dom(\Asym^{\frac{\sigma}{2}})$ to obtain well-posedness of \eqref{eq:full_system_neu1} if $h=0$. 
	
	For $\sigma=1$ we have the explicit representation of the spaces $U=\Hneu^2 \times \dom(-\lapl_M)$, 
	$W= \Hneu^1 \times D(-\lapl_M)^\half = H^1 \times H^1_\perp$ 
	and $V= L^2_{\diagup \confun} \times (L^2)^d$, yielding
${X} = L^2(0,T; \Hneu^2 \times \dom(-\lapl_M) ) \cap H^1(0,T; (L^2_{\diagup \confun})^* \times (L^2)^d)$ and $ Y = L^2(0,T;(L^2_{\diagup \confun})^* \times (L^2)^d).$ \\

In order to treat nonhomogeneous boundary conditions in the case of $\sigma=1$, we check assumption \ref{eq:assumptions} to apply Theorem \ref{th:wellposednonhom}. 
\begin{enumerate} [leftmargin=*]
		\item Assumption \ref{as:a}: We choose $\tild{U}=H^{2}_{\diagup \confun} \times (H^{2})^{d}$, $\tild{V}= (L^2)^{d+1}$, $\tild{W}=(H^1)^{d+1}$ and  
		the required embeddings hold, see embedding \eqref{eq:neumannembed} and the fact that $L^2_{\diagup \confun} \embed L^2$. This gives us $\tild{X} = L^2(0,T;H^{2}_{\diagup \confun} \times (H^2)^d) \cap H^1(0,T; (L^2_{\diagup \confun})^* \times (L^2)^d) $ and $\tild{Y}=Y$ from above.
		
		The modified embedding \eqref{as:Linfemb} can be shown as follows. 
		From a Gelfand embedding we have that
		\[ \label{eq:embedd}
		L^2(0,T;(H^2)^d) \cap H^1(0,T;(L^2)^d) \embed C(0,T;(H^1)^d) \] 
		and from the homogeneous case we obtain in the first component for $\sigma=1$ the embedding
		\[ \label{eq:embeddd} L^2(0,T; \Hneu^2) \cap H^1(0,T;(L^2_{\diagup \confun})^*) \embed C(0,T;\Hneu^1). \]
		Combining embedding \eqref{eq:embedd} with $d=1$ and \eqref{eq:embeddd} using $\cup$, yields  
		\[ \label{eq:embedddd} L^2(0,T;H^2_{\diagup \confun}) \cap H^1(0,T;(L^2_{\diagup \confun})^*) \embed C(0,T;H^1). \] 
		Therefore, since the embeddings \eqref{eq:embedd} and \eqref{eq:embedddd} hold, the modified embedding holds as well.
		\color{black}
		\item Assumption \ref{as:d}: $\bcop_N$ is linear and 
		$u \in \tild{X}$ with $\bcop_N(u) = 0$ implies $u \in X$ by construction of $\bcop_N$.
		\item Assumption \ref{as:b}: For $\tild{\Asym}$ we choose $\dom(\tild{\Asym})= H^2_{\diagup \confun} \times (H^{2})^d \subset H$ and define $\tild{\Asym},\tild{\Aant}$ with the same formulas as $\Asym,\Aant$ earlier in the proof, so that $\tild{\Asym}\big\vert_{ X} = \Asym$ and $\tild{\Aant}\big\vert_{X} = \Aant$ follow naturally.
		
		For the well definedness of $\tild{\Asym}$, we note that $-\lapl_N$ can be seen as an operator mapping functions from $H^2_{\diagup \confun} \to L^2 \subset (L^2 _{\diagup \confun})^*$ and $-\lapl_M$ mapping from $(H^2)^d \to (L^2)^d$. 
		
		Boundedness of $\tild{ \Aant }:\tild{X} \to \tild{Y}$ holds, since due to $L^2(0,T;L^2 )^{d+1} \embed \tild{Y}$ with embedding constant $C_*$, we have as in the proof of Theorem \ref{th:wellposeddir} for $u \in \tild{X}$
		$$ \| \tild{\Aant} \tild{u} \|_{{\tild{Y}}} \leq C_* \| \tild{\Aant} \tild{u}  \|_{{{L^2(0,T;L^2)^{d+1}}}} \leq C_* C_\Aant \| \tild{u} \|_{L^2(0,T;H^1_0)^{d+1}} \leq C_* C_\Aant \| \tild{u} \|_{\tild{X}}. $$ 
		
		We define $\tild{\B}$ via the same formula as $\B$ earlier in the proof, so that $\tild{\B}\big\vert_{ X \times X}=\B$ naturally.
		
		Note that estimate \eqref{as:bilinestimate1} also holds for the tilde norms as one can see from \eqref{eq:estimateBdir} and the fact that $H^2$ and $H^2_{\diagup \confun}$ share the same norm. Therefore, by the same argument as in the proof of Theorem \ref{th:linearwellposed} we conclude boundedness of $\tild{\B}:\tild{X} \times \tild{X} \to Y$.  
		\item Assumption \ref{as:c}: For the boundary data $h$ we compactly write $h=(h_1,h_2,h_3)$ in the following. To construct the demanded extension $\tild{h}=(\tild{h}_p,\tild{h}_v)^\transpose$ we use for $\tild{h}_p$ the Neumann harmonic extension operator $N^\lapl: H^{s- \frac{3}{2}}(\boundary \domain) \to H^s$, cf. \cite{kaltenbacher2011a}, yielding as extension of $h_1 \in H^1(0,T;H^\half(\boundary \domain))$ a unique $\tild{h}_p := N^\lapl h_1 \in H^1(0,T;H^{2}_{\diagup \confun})$.
		To obtain $\tild{h}_v$, we use Lemma \ref{le:hodgeregularity} with $s=0$, $\chi=0$, $\xi = h_2$, $\nu = h_3$ giving a unique $\tild{h}_v$ such that $$\curl \tild{h}_v \in (L^2)^d, \ \curl \curl  \tild{h}_v \in (L^2)^d, \ \divv  \tild{h}_v \in H^{1}$$
		for each time $t \in [0,T]$.
		Using \eqref{eq:embedH1} from Lemma \ref{le:divcurl}, we actually obtain that $\tild{h_v} \in (H^1)^d$ for each time and by using the arguments in the proof of Lemma \ref{le:divcurl} we have that $\tild{h}_v \in H^1(0,T;H^{2})^d$, due to the regularity of $h_2,h_3$ and $\domain$. Altogether, due to the boundedness of the extension operators involved, we obtain for some $c_h>0$ the required bound $$ \| \tild{h} \|_{\tild{X}} \leq c_h \| h \|_{H^1(0,T;H^{\half}(\boundary \domain) \times H^\frac{3}{2}(\boundary \domain) \times H_\times^\half )}.$$
		\color{black}
	\end{enumerate}
Hence, choosing $f,g,h$ small enough we can apply Theorem \ref{th:wellposednonhom}.
	\end{proof}
	\subsection{Neumann-Dirichlet boundary conditions}
	To treat boundary condition \eqref{bc-c}, we consider the system
		\[ \label{eq:full_system_noslip}
	\begin{split} 
		\dt u + \A u + \B[u ,u ] &= \ff \\
		u(0) &= g \\
		\bcop_S(u) &= h
	\end{split}
	\]
	with $$\bcop_S(p,v) = ( \T(\grad p) \cdot n , \ \T(v_1), \ \dots, \  \T(v_d) )^\transpose. $$ 
	\begin{theorem} \label{th:wellposednoslip}
	Let $T>0$, $\sigma \in (\half, 1]$, and 
	\[ f \in L^2(0,T; H^{{\sigma-1}})^{d+1}, \ \ g \in (H^{{\sigma}})^{d+1}, \ h \in H^1(0,T;H^{\sigma - \half}(\boundary \Omega) 
	\times H^{\sigma + \half}(\partial \Omega)^d )
	\] small enough with $(\tild{h}(0) - g) \in \Hneu^\sigma \times (\Hdir^{{\sigma}})^{d}$, where $\tild{h}= ( \tild{h_1}, \tild{h_2}, ..., \tild{h}_{d+1})^\transpose$ and $\tild{h}_1$ is the Neumann harmonic extension of $h_1$ and $\tild{h}_2$ to $\tild{h}_{d+1}$ are Dirichlet harmonic extensions of $h_1$ to $h_{d+1}$, respectively. Then, there exists a unique solution 
	$$u \in  L^2(0,T;H^{{1 + \sigma }}_{\diagup \confun} \times (H^{1+ \sigma})^d) \cap H^1(0,T; \Hneu^{\sigma - 1} \times (\Hdir^{\sigma-1} )^d) $$ of equation \eqref{eq:full_system_noslip}.
\end{theorem}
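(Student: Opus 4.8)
The plan is to follow the template of Theorems \ref{th:wellposeddir} and \ref{th:wellposedneu}, combining the Neumann Laplacian for the pressure with the Dirichlet Laplacian acting componentwise on the velocity. First I would verify Assumption \ref{se:assumption} on the Hilbert space $H := L^2_{\diagup \confun} \times (L^2)^d$ with $\action{u,v} = \int_\Omega u \cdot v$. The self-adjoint part would be the block operator
\[
\Asym : \dom(\Asym) \subset H \to H, \quad (p,v) \mapsto (- \pvis \lapl_{\tild N} p, \ - \vvis \lapl_D v_1, \dots, - \vvis \lapl_D v_d)^\transpose,
\]
with $\dom(\Asym) = \dom(-\lapl_N)_{\diagup \confun} \times \dom(-\lapl_D)^d$. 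Self-adjointness and a pure point spectrum with strictly positive eigenvalues follow as in Theorem \ref{th:wellposedneu} (factoring out constants removes the zero eigenvalue of $-\lapl_N$, while $-\lapl_D$ already has strictly positive spectrum). Since $\Asym$ is block-diagonal its fractional powers split componentwise, so by \eqref{eq:dirchar}, \eqref{eq:neuchar} and the interpolation identity in Remark \ref{re:fractional} I expect $\dom(\Asym^{s/2}) = \Hneu^s \times (\Hdir^s)^d$.

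The skew-symmetric part is again $\Aant : (p,v) \mapsto (\divv v, \grad p)^\transpose$. Writing $u=(p,v)$ and $\tild u=(\tild p,\tild v)$, the structural point is the boundary term in
\[
\action{\Aant u, \tild u} = -\int_\Omega v \cdot \grad \tild p + p \, \divv \tild v + \int_{\partial\Omega} \bigl[(v \cdot n)\tild p + p (\tild v \cdot n)\bigr]\,dS.
\]
On $\dom(\Asym^{1/2}) = \Hneu^1 \times (\Hdir^1)^d = \Hneu^1 \times (H^1_0)^d$ the velocity traces vanish, so $v \cdot n = \tild v \cdot n = 0$ and the boundary integral drops out, giving \eqref{as:anti}; this works precisely because the factor multiplying the Neumann quantity $p$ is the normal trace of the \emph{Dirichlet}-constrained velocity. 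Boundedness of $\Aant$ on $\dom(\Asym^{1/2})$ follows exactly as in Theorem \ref{th:wellposeddir}. For the bilinear estimate \eqref{as:bilinestimate1} I would reuse \eqref{eq:estimateBdir} down to the $(H^r)^{d+1}$ norms and then invoke $\dom(\Asym^{r/2}) = \Hneu^r \times (\Hdir^r)^d \embed (H^r)^{d+1}$, valid since $\Hneu^r \embed H^r$ and $\Hdir^r \embed H^r$. The Hölder counting in \eqref{eq:estimateBdir} needs $\sigma \ge \tfrac12$, but well-definedness of the Neumann trace $\T(\grad p)\cdot n$ of $p\in H^{1+\sigma}$, i.e.\ of $\T:H^\sigma \to H^{\sigma-\half}(\partial\Omega)$, forces the strict $\sigma > \tfrac12$. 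Theorem \ref{th:nonlinear} then yields the homogeneous ($h=0$) result with $U = \Hneu^{\sigma+1}\times(\Hdir^{\sigma+1})^d$, $W = \Hneu^\sigma \times (\Hdir^\sigma)^d$, $V = \Hneu^{1-\sigma}\times(\Hdir^{1-\sigma})^d$.

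For $h \neq 0$ I would verify Assumption \ref{eq:assumptions} and apply Theorem \ref{th:wellposednonhom}. For \ref{as:a} take $\tild U = H^{1+\sigma}_{\diagup\confun}\times (H^{1+\sigma})^d$, so that $\tild X = L^2(0,T;\tild U)\cap H^1(0,T;V^*)$, and establish $\tild X \embed C([0,T];\tild W)$ by the same $\cup$-combination of a spectral (abstract) embedding and a standard Gelfand-triple Sobolev embedding as in Theorem \ref{th:wellposeddir}, carried out separately in the Neumann pressure slot and the Dirichlet velocity slots. For \ref{as:d}, $\bcop_S$ is linear and $\bcop_S(u)=0$ forces $\T(\grad p)\cdot n = 0$ and $\T(v_i)=0$, which upgrades $u \in \tild X$ to $u \in X$ via \eqref{eq:neuchar}, \eqref{eq:dirchar} (here $1+\sigma > \tfrac32$) and the trace fact $\T(v_i)=0 \Rightarrow v_i \in H^1_0$. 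Assumption \ref{as:b} is checked by extending $\Asym,\Aant,\B$ to $\tild U$ with the same formulas. The genuinely new piece is \ref{as:c}: I would build the mixed harmonic extension $\tild h = (\tild h_1, \dots, \tild h_{d+1})^\transpose$ using the Neumann extension $N^\lapl : H^{\sigma-\half}(\partial\Omega) \to H^{\sigma+1}$ for the pressure datum $h_1$ and the Dirichlet extension $D^\lapl : H^{\sigma+\half}(\partial\Omega) \to H^{\sigma+1}$ for $h_2,\dots,h_{d+1}$, so that $\tild\Asym \tild h = \lapl \tild h = 0$, $\bcop_S(\tild h)=h$, and $\|\tild h\|_{\tild X} \le c_h \|h\|$ by boundedness of $N^\lapl, D^\lapl$; the stated compatibility $\tild h(0)-g \in \Hneu^\sigma\times(\Hdir^\sigma)^d = W$ then closes the hypotheses.

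I expect the main obstacle to be conceptual rather than computational: confirming that the mixed boundary setting still yields a genuinely skew-symmetric $\Aant$ together with the clean block characterization $\dom(\Asym^{s/2}) = \Hneu^s \times (\Hdir^s)^d$. The skew-symmetry hinges on the fortunate pairing above — the Neumann pressure is always multiplied by a normal velocity trace, which vanishes under the Dirichlet condition — and it is exactly this compatibility that allows the full fractional range $\sigma \in (\tfrac12,1]$ to survive, in contrast to Theorem \ref{th:wellposedneu}, where the lack of a fractional Sobolev characterization of $-\lapl_M$ forces $\sigma = 1$. Assembling the mixed $C([0,T];\tild W)$ embedding and tracking the two distinct extension operators in \ref{as:c} is then bookkeeping that parallels the two preceding theorems.
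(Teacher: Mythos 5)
Your proposal is correct and follows essentially the same route as the paper's proof: the same block operator $\Asym = (-\pvis\lapl_{\tild{N}}, -\vvis\lapl_D, \dots)^\transpose$ on $H = L^2_{\diagup\confun}\times(L^2)^d$, skew-symmetry of $\Aant$ via vanishing normal traces of the Dirichlet-constrained velocity, reuse of \eqref{eq:estimateBdir} with $\Hneu^s, \Hdir^s \embed H^s$, and the mixed extension $\tild h = (N^\lapl h_1, D^\lapl h_2, \dots, D^\lapl h_{d+1})^\transpose$ feeding into Theorems \ref{th:nonlinear} and \ref{th:wellposednonhom}. The paper itself describes its proof as a combination of Theorems \ref{th:wellposeddir} and \ref{th:wellposedneu}, which is exactly your strategy; your added remark explaining why the trace of $\grad p$ forces the strict inequality $\sigma > \tfrac12$ is a small bonus the paper leaves implicit.
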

\begin{proof}
		The proof is effectively a combination of the arguments in the proofs of Theorems \ref{th:wellposeddir} and \ref{th:wellposedneu}, but for the sake of completeness, we elaborate the steps here in detail. 
		
		We first check assumption \ref{se:assumption} with $H = L^2_{\diagup \confun} \times (L^2)^d$ as underlying Hilbert space with scalar product $\action{ u, v} = \int_{\Omega} u \cdot v$. 
	\begin{enumerate}[leftmargin=*]
		\item Assumption \ref{se:assa}: Holds, as in the proof of Theorem \ref{th:wellposedneu}.
		\item Assumption \ref{se:assb}: We define
		\[ \Asym : \dom(-\lapl_{\tild{N}}) \times \dom( - \lapl_D)^{d} \subset H \to H, \ u=(p,v_1, \dots, v_d)^\transpose \mapsto  (
		- \zeta \lapl_{\tild{N}} p , - \mu \lapl_D v )^\transpose
		, \] 
		with $
		\dom(\Asym) = \dom( - \lapl_N)_{\diagup \confun} \times \dom( - \lapl_D)^d 
		$. Since $- \lapl_{\tild{N}}$ and $- \lapl_D$ are self adjoint, we obtain that $\Asym$ is also self adjoint. Since $H^1_0 \embed H^1$ and $H^1_{\diagup \confun} \embed L^2$ compactly for $C^2$-regular domains self adjointness is equivalent to the desired spectral decomposition, see arguments in the proofs of Theorems \ref{th:wellposeddir}, \ref{th:wellposedneu}.
		
		For $\Aant : \dom(\Asym) \subset H \to H, u=(p, v)^\transpose \mapsto (\divv v, \grad p)^\transpose$ by partial integration as in the proof of Theorem \ref{th:wellposeddir} we have
		$\action{\Aant u , \tild{u}} 
		= \action{u, - \Aant \tild{u}}$
		for $u , \tild{u} \in  \dom(\Asym^\half) = H^1_{\diagup \confun} \times H^1_0 $, since $v$ vanishes on $\partial \domain$ for $ v \in H^1_0$. The required boundedness of $\Aant$ follows again as in the proof of Theorem \ref{th:wellposeddir}. 
		\item Assumption \ref{se:assc}: Follows from estimate \eqref{eq:estimateBdir}, since $\Hneu^s \embed H^s$ and $\Hdir^s \embed H^s$. 
		\color{black}
	\end{enumerate}
	Hence, we can apply Theorem \ref{th:nonlinear} under the assumptions that $f,g$ are sufficiently small with $f \in L^2(0,T; \Hneu^{\sigma - 1} \times (\Hdir^{\sigma-1})^d), \ g \in \Hneu^\sigma \times (\Hdir^\sigma)^d$ to obtain well-posedness of \eqref{eq:full_system_noslip} if $h=0$. By \eqref{eq:dirchar} and \eqref{eq:neuchar} we obtain the spaces $U= \Hneu^{\sigma+1} \times (\Hdir^{\sigma+1})^{d}$, $W=\Hneu^\sigma \times (\Hdir^\sigma)^{d}$ and $V= \Hneu^{1-\sigma} \times (\Hdir^{1-\sigma})^d$, yielding
	${X} = L^2(0,T;\Hneu^{\sigma+1} \times (\Hdir^{\sigma+1})^d) \cap H^1(0,T; \Hneu^{\sigma-1} \times (\Hdir^{\sigma-1})^d)$ and $ Y = L^2(0,T;\Hneu^{\sigma-1} \times (\Hdir^{\sigma-1})^d).$  
	
	In order to treat nonhomogeneous boundary conditions, we check assumption \ref{eq:assumptions} to apply Theorem \ref{th:wellposednonhom}. 
	\begin{enumerate} [leftmargin=*]
		\item Assumption \ref{as:a}: We choose $\tild{U}=H^{1+\sigma}_{\diagup \confun} \times (H^{1+\sigma})^{d}$, $\tild{V}=(H^{1-\sigma})^{d+1}$, $\tild{W}=(H^\sigma)^{d+1}$ and  
		the required embeddings follow. 
		This gives us $\tild{X} = L^2(0,T; H^{\sigma+1}_{\diagup \confun } \times H^{{\sigma+1}})^{d} \cap H^1(0,T;\Hneu^{\sigma-1} \times \Hdir^{{\sigma-1}})^{d}$ and $\tild{Y}=Y$ from above.
		The modified embedding \eqref{as:Linfemb} can again be shown by comining the modified embeddings of the previous proofs of Theorems \ref{th:wellposeddir}, \ref{th:wellposedneu}.  
		\item Assumption \ref{as:d}: $\bcop_S$ is linear and 
		$u \in \tild{X}$ with $\bcop_S(u) = 0$ implies $u \in X$ by construction of $\bcop_S$.
		\item Assumption \ref{as:b}: For $\tild{\Asym}$ we choose $\dom(\tild{\Asym})= H^2_{\diagup \confun} \times (H^{2})^d \subset H$ and define $\tild{\Asym},\tild{\Aant}$ with the same formulas as $\Asym,\Aant$ earlier in the proof, so that $\tild{\Asym}\big\vert_{ X} = \Asym$ and $\tild{\Aant}\big\vert_{X} = \Aant$ follow naturally.
		
		For the well definedness of $\tild{\Asym}$, we note that also $-\lapl_N$ can be seen as an operator mapping functions from $H^{s+1}_{\diagup \confun} \to (H^{1-s})^* \subseteq \Hneu^{s - 1}$ for $s\leq 1$. 
		
		Boundedness of $\tild{ \Aant }:\tild{X} \to \tild{Y}$ holds, by arguments of the previous proofs, as well as the assumptions on $\tild{\B}$.
		\item Assumption \ref{as:c}:To construct the demanded extension $\tild{h}$ we use the Dirichlet and Neumann harmonic extension operators as in the proofs of Theorems \ref{th:wellposeddir} and \ref{th:wellposedneu}.
		We set $$\tild{h} := ( N^{\lapl} h_1, D^{\lapl} h_2 , \dots, D^{\lapl} h_{d+1} )^\transpose \in H^1(0,T;H^{\sigma+1})^{d+1}$$ and have by construction $\lapl \tild{h} = 0$ with $\bcop_S(\tild{h})=h$. Due to boundedness of $D^\lapl, N^\lapl$ and the embedding $H^1(0,T;H^{\sigma+1}) \embed \tild{X}$ it holds for some $C,c_h>0$
		$$ \| \tild{h} \|_{\tild{X}} \leq C \| \tild{h} \|_{H^1(0,T;H^{\sigma+1})^{d+1}} \leq c_h \| h \|_{H^1(0,T;H^{\sigma-\half}(\boundary \domain) \times H^{\sigma + \half}(\boundary \domain)^d )}. $$  
	\end{enumerate}
	Hence, choosing $f,g,h$ small enough we can apply Theorem \ref{th:wellposednonhom}.
\end{proof}	
\subsection{Dirichlet-Hodge/Lions boundary conditions}
	To treat boundary condition \eqref{bc-d}, we consider the system
\[ \label{eq:full_system_bc-d}
\begin{split} 
	\dt u + \A u + \B[u ,u ] &= \ff \\
	u(0) &= g \\
	\bcop_R(u) &= h
\end{split}
\]
with $$\bcop_R(p,v) = ( \T( p) , \ \T(v) \cdot n, \ \T(\curl v) \times n)^\transpose. $$ 
\begin{theorem} \label{th:dir-hod}
	Let $T>0$, $d=3$, and 
$$f \in L^2(0,T; L^2)^{d+1}, \ g \in (H^1)^{d+1}, \ h \in H^1(0,T;H^{\frac{3}{2}}(\boundary \domain) \times H^\frac{3}{2}(\boundary \domain) \times H_\times^\half )
$$ 
small enough with $ ( \tild{h}(0) - g ) \in  H^1_0 \times H^1_\perp$, where $\tild{h}$ is the harmonic extension of $h$ with respect to $\bcop_R$. Then, there is a unique solution 
$$u \in  L^2(0,T;H^2)^{d+1} \cap H^1(0,T; L^2)^{d+1} $$ of \eqref{eq:full_system_bc-d}. 
\end{theorem}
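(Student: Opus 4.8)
The plan is to treat Theorem~\ref{th:dir-hod} as a hybrid of Theorem~\ref{th:wellposeddir} (Dirichlet data on the pressure) and Theorem~\ref{th:wellposedneu} (Hodge/Lions data on the velocity), in the same spirit that Theorem~\ref{th:wellposednoslip} fused the Neumann and Dirichlet settings. I would work at $\sigma=1$ with $H=L^2\times(L^2)^d$---no constants need be factored out, since the Dirichlet Laplacian has strictly positive spectrum---and set
$$
\Asym(p,v)=(-\zeta\lapl_D p,\,-\mu\lapl_M v),\qquad \Aant(p,v)=(\divv v,\grad p),
$$
with $\dom(\Asym)=\dom(-\lapl_D)\times\dom(-\lapl_M)$, then verify Assumption~\ref{se:assumption}. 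Self-adjointness and the pure point spectrum of $\Asym$ follow component-wise as in the two cited proofs, the simple connectedness of $\Omega$ ruling out a zero eigenvalue of $-\lapl_M$. The decisive point is the skew-symmetry \eqref{as:anti}: the integration-by-parts identity from Theorem~\ref{th:wellposeddir} leaves the boundary term $\int_{\boundary\Omega}[(v\cdot n)\tild{p}+p(\tild{v}\cdot n)]\,dS$, which vanishes on $\dom(\Asym^{\half})=H^1_0\times H^1_\perp$ precisely because the pressure obeys $p=\tild{p}=0$ (homogeneous Dirichlet) \emph{and} the velocity obeys $v\cdot n=\tild{v}\cdot n=0$ (Hodge/Lions). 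Assumption~\ref{se:assc} is then inherited from \eqref{eq:estimateBdir}, once one records $\dom(\Asym^{s/2})\embed(H^s)^{d+1}$ via the interpolation argument \eqref{eq:neumannembed}, which rests on $\dom(-\lapl_D)\embed H^2$ and $\dom(-\lapl_M)\embed(H^2)^d$ from Lemma~\ref{le:divcurl}. With this, Theorem~\ref{th:nonlinear} delivers the homogeneous ($h=0$) result with $U=\Hdir^2\times\dom(-\lapl_M)$, $W=H^1_0\times H^1_\perp$ and $V=(L^2)^{d+1}$.

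For $h\neq 0$ I would verify Assumption~\ref{eq:assumptions} and invoke Theorem~\ref{th:wellposednonhom}. The ambient spaces are $\tild{U}=(H^2)^{d+1}$, $\tild{V}=(L^2)^{d+1}$, $\tild{W}=(H^1)^{d+1}$, giving $\tild{X}=L^2(0,T;(H^2)^{d+1})\cap H^1(0,T;(L^2)^{d+1})$ and $\tild{Y}=Y$; here the modified embedding $\tild{X}\embed C([0,T];\tild{W})$ is the plain Gelfand embedding $L^2(0,T;H^2)\cap H^1(0,T;L^2)\embed C([0,T];H^1)$ applied componentwise, cleaner than in Theorem~\ref{th:wellposeddir} because at $\sigma=1$ the time-derivative space is simply $(L^2)^{d+1}$. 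Linearity of $\bcop_R$ is immediate, and $\bcop_R(u)=0$ forces $p\in H^1_0$ together with $v\cdot n=0$ and $(\curl v)\times n=0$, hence $u\in X$. The operators $\tild{\Asym},\tild{\Aant},\tild{\B}$ are defined by the same formulas on $\dom(\tild{\Asym})=(H^2)^{d+1}$, their boundedness into $\tild{Y}=Y$ following the estimates already used for $\tild{\Aant}$ and $\tild{\B}$ in Theorems~\ref{th:wellposeddir} and~\ref{th:wellposedneu}.

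The only genuinely new bookkeeping is the harmonic lift $\tild{h}=(\tild{h}_p,\tild{h}_v)$ of Assumption~\ref{as:c}. For the pressure I take the Dirichlet harmonic extension $\tild{h}_p=D^\lapl h_1\in H^1(0,T;H^2)$, so that $\lapl_D\tild{h}_p=0$ and $\T(\tild{h}_p)=h_1$. For the velocity I would reuse the construction from Theorem~\ref{th:wellposedneu} unchanged: Lemma~\ref{le:hodgeregularity} with $s=0$, $\chi=0$, $\xi=h_2$, $\nu=h_3$ produces $\tild{h}_v$ with $\lapl_M\tild{h}_v=0$, $\tild{h}_v\cdot n=h_2$, $(\curl\tild{h}_v)\times n=h_3$ and $\curl\tild{h}_v,\curl\curl\tild{h}_v\in(L^2)^d$, $\divv\tild{h}_v\in H^1$; the div-curl embeddings \eqref{eq:embedH1}--\eqref{eq:embedH2} of Lemma~\ref{le:divcurl} then bootstrap $\tild{h}_v$ to $(H^2)^d$ for each time, with the $H^1$-in-time regularity inherited from that of $h$. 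This yields $\tild{\Asym}\tild{h}=0$, $\bcop_R(\tild{h})=h$ and the bound $\|\tild{h}\|_{\tild{X}}\le c_h\|h\|_{X_\bcop}$ with $X_\bcop=H^{3/2}(\boundary\Omega)\times H^{3/2}(\boundary\Omega)\times H_\times^\half$, while the stated compatibility $\tild{h}(0)-g\in H^1_0\times H^1_\perp=W$ supplies the initial-data hypothesis. Smallness of $f,g,h$ then lets me conclude via Theorem~\ref{th:wellposednonhom}.

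I expect the sole real obstacle to sit inside this velocity lift: confirming that Lemma~\ref{le:hodgeregularity}---stated only for $-\half<s<\half$ and hence applied at the endpoint value $s=0$---together with the two div-curl embeddings of Lemma~\ref{le:divcurl} indeed yields full $(H^2)^d$ spatial regularity and $H^1$ temporal regularity of $\tild{h}_v$ from the data $h_2\in H^{3/2}(\boundary\Omega)$, $h_3\in H_\times^\half$. As this is exactly the computation already performed for Theorem~\ref{th:wellposedneu}, the remaining task is to transcribe it and to check that swapping the Neumann pressure lift for the Dirichlet lift $D^\lapl$ leaves each estimate intact.
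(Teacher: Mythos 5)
Your proposal is correct and takes essentially the same approach as the paper: the paper in fact omits the proof of Theorem \ref{th:dir-hod} entirely, stating only that it is "a straightforward combination of the previous proofs," and your write-up is precisely that combination. Your key checks --- skew-symmetry of $\Aant$ on $\dom(\Asym^{\half})=H^1_0\times H^1_\perp$, the choice $\sigma=1$ with no quotient by constants, inheritance of the bilinear estimate via \eqref{eq:estimateBdir} and Lemma \ref{le:divcurl}, and the mixed lift ($D^\lapl$ for the pressure, Lemma \ref{le:hodgeregularity} for the velocity) --- are exactly what merging the proofs of Theorems \ref{th:wellposeddir} and \ref{th:wellposedneu} yields.
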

\begin{proof}
	The proof is omitted, as it consists of a straightforward combination of the previous proofs.
\end{proof}	
	\section{Conclusion}
	To conclude, we have established well-posedness of \eqref{eq:full_system} for the boundary condition configurations Dirichlet–Dirichlet, Dirichlet–Hodge/Lions, Neumann–Hodge/Lions, and Neumann–Dirichlet.
	For cases involving Dirichlet or Neumann boundary conditions, we have obtained well-posedness under lower regularity assumptions on the data in dimensions two and three.
	When Hodge/Lions boundary conditions are present, we restrict the results to dimension three and standard regularity.
	Once the abstract framework from Section~\ref{se:abstractsystem} is in place, well-posedness follows directly upon choosing appropriate domains for the operators. 
	
	\revision{All of the well-posedness results are established under the assumption that the data are sufficiently small. If the solution possesses enough regularity, the embedding in \ref{as:Linfemb} is not required, and global well-posedness can be inferred, as shown in \cite{QuadraticWave} for homogeneous Dirichlet boundary conditions. It is expected that these results carry over to the case of inhomogeneous boundary conditions as well.}
	\Margin{3}

	
There are several natural directions for future research.
A first step would be to compute numerical solutions of \eqref{eq:full_system} and compare the behavior under the different boundary condition configurations.
Another question is how the present results extend to less regular domains, such as Lipschitz domains, which form the natural setting for finite element methods.
Less smooth boundaries imply less smooth domains of the Laplace operator, making it relevant in what sense the fractional domains of the Laplacian can be characterized by Sobolev spaces in analogy to e.g. $\Hdir^s$.
Another open task in this direction would be to characterize the fractional domains of $-\lapl_M$ for smooth and Lipschitz boundaries. 

Other types of boundary conditions, such as absorbing boundary conditions, see \cite{Shevchenko2015}, are also of interest in nonlinear acoustics.
It remains an open question whether analogous boundary conditions can be formulated for equation~\eqref{eq:full_system}.

\revision{Concerning the limit of vanishing damping coefficients 
$\zeta$ and $\mu$ in \eqref{eq:full_system}, note that, as seen in the energy estimate \eqref{eq:energyestimateB}, the nonlinear terms must be controlled using the regularity provided by the symmetric operator $\Asym$. Consequently, for low regularity data, the controllability of the nonlinear terms relies on the Laplace operator. In contrast, if the initial data are sufficiently regular, well-posedness for the case $\zeta = \mu = 0$ can be established using hyperbolic methods. This scenario, among other aspects, is studied in the unfinished work \cite{LehnerMeliani2026}.} 
\Margin{5}
	\section*{Acknowledgment}
The author gratefully acknowledges Barbara Kaltenbacher for her careful reading of the manuscript and for her valuable suggestions for its improvement
	This work is supported by the Austrian Science Fund (FWF) [10.55776/P36318].
\bibliographystyle{plain}
\bibliography{lit}
\end{document}